\author{Amir Akbar Tabatabai}
\theoremstyle{plain} 
\newtheorem{thm}{Theorem}[section]
\newtheorem{lem}[thm]{Lemma}
\theoremstyle{definition}
\newtheorem{dfn}[thm]{Definition}
\newtheorem{exam}[thm]{Example}
\newtheorem{rem}[thm]{Remark}
\def\PA{\mathrm{PA}}
\def\Pr{\mathrm{Pr}}
\def\S4{\mathrm{S4}}
\def\Cons{\mathrm{Cons}}
\def\Rfn{\mathrm{Rfn}}
\begin{document}
\title{Russellian Propositional Logics and the BHK Interpretation} 

\author{Amirhossein Akbar Tabatabai \footnote{The author is supported by the ERC Advanced Grant 339691 (FEALORA)}\\
Institute of Mathematics\\
Academy of Sciences of the Czech Republic\\
tabatabai@math.cas.cz}

\date{February 9, 2017}

\maketitle

\begin{abstract}
The BHK interpretation interprets propositional statements as descriptions of the world of proofs; a world which is hierarchical in nature. It consists of different layers of the concept of proof; the proofs, the proofs about ``proofs" and so on. To describe this hierarchical world, one approach is the Russellian approach in which we use a typed language to reflect this hierarchical nature in the syntax level. In this case, since the connective responsible for this hierarchical behavior is implication, we will use a \textit{typed} language equipped with a hierarchy of implications, $\{\rightarrow_n\}_{n=0}^{\infty}$. In fact, using this typed propositional language, we will introduce the hierarchical counterparts of the logics $\mathbf{BPC}$, $\mathbf{EBPC}$, $\mathbf{IPC}$ and $\mathbf{FPL}$ and then by proving their corresponding soundness-completeness theorems with respect to their natural BHK interpretations, we will show how these different logics describe different worlds of proofs embodying  different hierarchical behaviors.
\end{abstract}

\section{Introduction}
The intuitionistic tradition is based on the core concept of proof and its most important claim is that the statements in the language are descriptions of the world of proofs rather than the actual Platonistic world.\footnote{Notice that our paraphrase of the intuitionistic tradition's claim seems extremly unorthodox. The reason is that we refer to the Platonistic world which can not exist in the intuitionistic terms. But it is not actually a paradox or a misinterpretation. In fact, following G\"{o}del, \cite{G}, we interpret intuitionism from outside of its paradigm and we assume that we live in a Platonistic world in which intuitionism is a way of describing the sub-world of the \textit{classical proofs}. This approach has been shown to be extremely useful. For more discussion about this approach and other approaches, see \cite{AK1}.} Therefore, the intended semantics of intuitionistic mathematics is just a relation which describes the way that intuitionistic logic describes the world of proofs. This semantics has a very complicated character and resists any kind of natural formalization. The reason is the hierarchical nature of the concept of proof which claims that there are different layers in the world of proofs. The first layer is the layer of proofs about the facts of the world. The second is the level of proofs about the proofs in the first level and there are also the third one, the fourth and so on. The important thing is that these higher level proofs occur in a very natural way. For instance, consider the statement ``\textit{$A$ implies $B$}". A proof for this statement is a \textit{proof} which transforms proofs of $A$ to proofs for $B$. Hence, this \textit{proof} is about proofs, meaning that it belongs to a higher level of the levels of the proofs of $A$ and $B$. \\

So far we have shown that the world of proofs is hierarchical in nature. Therefore, like any other hierarchical phenomenon, there are two main approaches to describe it: The Zermelo approach and the Russelian one. The Zermelo approach uses an \textit{untyped} language to investigate the \textit{orderless propositions} about the phenomenon, i.e. the propositions which are true in the world regardless of the levels of the objects in the world. The classical example of this approach is Zermelo's set theory in the untyped first order language. Although there is a canonical order in the world of sets, i.e. the rank of sets, Zermelo's set theory ignores this order and axiomatizes the universe uniformly. \\

Now, consider using the Zermelo approach for the world of proofs. We need two different components. First, an untyped language which in this case is the usual propositional language and second, an interpretation to interpret this language as a way of describing the world of proofs. The latter is provided by the following BHK interpretation:\\
\\
$\bullet$ a proof for $A \wedge B$ is a pair of a proof for $A$ and a proof for $B$.\\
$\bullet$ a proof for $A \vee B$ is a proof for $A$ or a proof for $B$.\\
$\bullet$ a proof for $A \rightarrow B$ is a construction which transforms any proof of $A$ to a proof for $B$.\\
$\bullet$ $\bot$ does not have any proof.\\

This approach seems natural and easy to follow. We have an untyped simple language which is informally capable of describing the whole complexity of the world of proofs. Therefore, it seems natural to try to formalize this BHK interpretation as a formalized intended semantics for intuitionistic mathematics. But there are some fundamental problems along the way. The reason is that some of the most important statements in the language do have an internal inherent order and therefore the Zermelo approach has to ignore them; the situation which is not what we expect. Let us illuminate the idea by an example. Consider the following theorem of $\mathbf{IPC}$: $(\top \rightarrow \bot) \rightarrow \bot$. If we interpret this statement by the BHK interpretation, its content would be the following: ``There is a \textbf{proof} which shows that the \textit{provability} of the provability of $\bot$ implies the provability of $\bot$." It seems that just by the informal interpretation of this statement we can be sure of its truth and it should be considered as an axiom in our Zermelo theory. The reason is that it is just a special case of the soundness of our theories which we intuitively believe. On the other hand, investigating the formula more precisely, we will notice that there are different levels of proofs in the statement, and if we can ignore the order in the world of proofs, then it should be true or false regardless the levels of proofs we are using. If we interpret all of the proofs as proofs in the same level, say in the theory $T$, then the statement means that $\Box_T(\Box_T \Box_T \bot \rightarrow \Box_T \bot)$. Using L\"{o}b's theorem, it implies that $\Box_T \Box_T \bot$ which contradicts with the intuitive condition that there are no proofs of $\bot$. But if we interpret the ``provability" in the statement as the provability in $T$, the \textit{provability} as the provability in its meta-theory $S$ and \textbf{proof} as a proof in the meta-theory of $S$, say $R$, then it means that $R$ proves $\Box_S \Box_T \bot \rightarrow \Box_T \bot$. This is possible if $R$ is strong enough to prove the soundness of $S$; a condition that seems natural and acceptable. It actually is the informal reason behind our belief in the intuitive truth of this theorem of $\mathbf{IPC}$.\\
This observation shows that we can not ignore the natural order of implications in the statements. Hence, we can not follow the Zermelo approach in a very natural way of ignoring orders in interpreting the implications. But is there any way to deal with these kinds of referring problems in the Zermelo approach? Can we handle it? In \cite{AK1} we showed that by some natural techniques, it is possible to use an untyped language to describe the hierarchical world of proofs. However, we have to emphasize that those techniques make the whole work too complicated to follow. For more information, see \cite{AK1}.\\

Another approach is the Russellian approach. In spite of the Zermelo approach, it is based on using a \textit{typed} language to reflect the hierarchical nature of the world in a very syntactical way. Here again, the main example is set theory, in this case, the Russellian one. As it is well-known, it limits the syntax of the language in a way that we can not use statements such as $a \in b$ in which the level of $b$ is less than or equal to the level of $a$. Now, consider using this approach for the world of proofs. It is clear that instead of just one implication, we need a hierarchy of implications, $\{\rightarrow_n\}_{n=0}^{\infty}$ to reflect the hierarchical concept of provability. Moreover, we have to limit the formulas in the language in a way that $A \rightarrow_n B$ is a formula iff $n$ is larger than all the indices of the implications in $A$ and $B$. It means that the implication $\rightarrow_n$ refers to a higher level than the implications in $A$ or $B$. In the following, we will persue the Russellian approach. Indeed, we will use the above mentioned language to formalize the hierarchical world of proofs on the one hand, and define the BHK interpretation as the intended interpretation of this language on the other. Then, we will establish the soundness-completeness results for some of the propositional logics with respect to their canonical BHK interpretations. These logics can be considered as the description of different kinds of behaviors of the hierarchies of provabilities. 
\section{Preliminaries}
Our main strategy to prove the soundness-completeness theorems for our propositional logics is reducing them to the soundness-completeness theorems for their modal counterparts introduced in \cite{AK2}. In this section we will cover what we need from \cite{AK2} to follow this strategy.\\

First of all we need a new modal language to capture the provability-based behavior of hierarchies: 
\begin{dfn}\label{t2-1}
Consider the language of modal logics with infinitely many modalities, $\{\Box_n\}_{n=0}^{\infty}$, The set of formulas in this language, $\mathcal{L}_{\infty}^{\Box}$, is defined as the least set of expressions which includes all atomic formulas and is closed under all boolean operations and also the following operation: If $A \in \mathcal{L}_{\infty}^{\Box}$ and $n$ is bigger than all indices of boxes occurred in $A$ then $\Box_n A \in \mathcal{L}_{\infty}^{\Box}$. In other words, $A \in \mathcal{L}_{\infty}^{\Box}$, if $A$ is a usual formula in the modal language and also the index of any box is bigger than the indices of all other boxes in its scope.
\end{dfn}
The intuition behind this definition is that the outer box refers to the provability predicate of a meta-theory and the inner boxes refer to the lower theories in the hierarchy. Therefore, it seems natural to assume that the situation in which a theory speaks about itself or higher theories, should be considered as a syntactical error. \\

After introducing the language, we need the intended semantics. The following is a formalization of the combination of a real world which atomic statements informally refer to, and the hierarchy of theories, meta-theories, meta-meta-theories and so on to interpret the boxes in the language.
\begin{dfn}\label{t2-2} 
A provability model is a pair $(M, \{T_n\}_{n=0}^{\infty})$ where $M$ is a model of $I\Sigma_1$ and $\{T_n\}_{n=0}^{\infty}$ is a hierarchy of arithmetical r.e. theories such that for any $n$, $I\Sigma_1 \subseteq T_n \subseteq T_{n+1}$ provably in $I\Sigma_1$.
\end{dfn}
We also need the notions of arithmetical substitution, evaluation of modal formulas by substitutions and finally the satisfaction relation:
\begin{dfn}\label{t2-3}
Let $(M, \{T_n\}_{n=0}^{\infty})$ be a provability model and $A \in \mathcal{L}_{\infty}^{\Box}$ be a formula. Then by an arithmetical substitution $\sigma$, we mean a function from atomic formulas to the set of arithmetical sentences. Moreover, by $A^{\sigma}$ we mean an arithmetical sentence which is resulted by substituting the atomic formulas by $\sigma$ and interpreting any $\Box_n$ as the provability predicate of $T_n$. The interpretation of boolean connectives are themselves. Moreover, if $\Gamma$ is a sequence of formulas $A_i$, by $\Gamma^{\sigma}$ we mean the sequence of $A_i^{\sigma}$.
\end{dfn}
\begin{dfn}\label{t2-4}
Let $(M, \{T_n\}_{n=0}^{\infty})$ be a provability model and $A \in \mathcal{L}_{\infty}^{\Box}$ be a formula. Then we say $(M, \{T_n\}_{n=0}^{\infty}) \vDash A$ if for any arithmetical substitution $\sigma$, $M \vDash A^{\sigma}$. Moreover, if $\Gamma$ and $\Delta$ are sequences of formulas and $\mathcal{C}$ a class of provability models, by $\mathcal{C} \vDash \Gamma \Rightarrow \Delta$, we mean that for any $(M, \{T_n\}_{n=0}^{\infty}) \in \mathcal{C}$, and for any arithmetical substitution $\sigma$, if $M \vDash \bigwedge \Gamma^{\sigma}$, then $M \vDash \bigvee \Delta^{\sigma}$.
\end{dfn}
So far, we have defined provability models to capture the informal hierarchical concept of provability and also we introduced an appropriate language to reflect these models. Next is a definition of some of the natural modal theories in this language to formalize some of the natural properties of hierarchies: 
\begin{dfn}\label{t2-5}
Consider the following set of axioms:
\begin{itemize}
\item[$(\mathbf{H})$]
$\Box_{n} A \rightarrow \Box_{n+1} A$
\item[$(\mathbf{K}_h)$]
$\Box_n (A \rightarrow B) \rightarrow (\Box_n A \rightarrow \Box_n B)$
\item[$(\mathbf{4}_h)$]
$\Box_n A \rightarrow \Box_{n+1} \Box_n A$
\item[$(\mathbf{D}_h)$]
$\neg \Box_n \bot $
\item[$(\mathbf{L}_h)$]
$\Box_{n+1}(\Box_n A \rightarrow A) \rightarrow \Box_n A$
\item[$(\mathbf{T}_h)$]
$\Box_n A \rightarrow A$
\item[$(\mathbf{5}_h)$]
$\neg \Box_n A \rightarrow \Box_{n+1} \neg \Box_n A$
\end{itemize}
Let $X$ be a set of these axioms. By $L(X)$ we mean the least set of formulas in $\mathcal{L}_{\infty}^{\Box}$, which contains all classical tautologies on formulas in $\mathcal{L}_{\infty}^{\Box}$, includes all instances of the set $X$ and is closed under the following rules:
\begin{itemize}
\item[$(\mathbf{MP})$]
If $A \in L(X)$ and $A \rightarrow B \in L(X)$ then $B \in L(X)$.
\item[$(\mathbf{NC}_h)$]
If $A \in L(X)$ then $\Box_n A \in L(X)$.
\end{itemize}
Moreover, if $\Gamma \cup \{A\} \subseteq \mathcal{L}_{\infty}$, by $\Gamma \vdash_{L(X)} A$ we mean that there exists a finite set $\Delta \subseteq \Gamma$ such that $L(X) \vdash \bigwedge \Delta \rightarrow A$.\\
Finally, we define $\mathbf{K4}_h=L(\mathbf{H}, \mathbf{K}_h, \mathbf{4}_h)$, $\mathbf{KD4}_h=L(\mathbf{H}, \mathbf{K}_h, \mathbf{4}_h, \mathbf{D}_h)$, $\mathbf{S4}_h=L(\mathbf{H}, \mathbf{K}_h, \mathbf{4}_h, \mathbf{T}_h)$, $\mathbf{GL}_h=L(\mathbf{H}, \mathbf{K}_h, \mathbf{4}_h, \mathbf{L}_h)$, $\mathbf{KD45}_h=L(\mathbf{H}, \mathbf{K}_h, \mathbf{D}_h, \mathbf{4}_h, \mathbf{5}_h)$ and $\mathbf{S5}_h=L(\mathbf{H}, \mathbf{K}_h, \mathbf{4}_h, \mathbf{T}_h, \mathbf{5}_h)$.
\end{dfn}
Fortunately, some of these logics have a nice proof theoretic behavior. For instance, the logics $\mathbf{K4}_h$, $\mathbf{KD4}_h$ and $\mathbf{S4}_h$ have reasonable sequent calculi which have the cut elimination property. To introduce them, consider the following set of rules: 
\begin{flushleft}
	\textbf{Axioms:}
\end{flushleft}
\begin{center}
	\begin{tabular}{c c}
		\AxiomC{$  A \Rightarrow A$ }
		\DisplayProof 
			&
		\AxiomC{$ \bot \Rightarrow  $}
		\DisplayProof
	\end{tabular}

\end{center}
\begin{flushleft}
 		\textbf{Structural Rules:}
\end{flushleft}
\begin{center}
	\begin{tabular}{c}
		\begin{tabular}{c c}
		\AxiomC{$\Gamma  \Rightarrow \Delta$}
		\LeftLabel{\tiny{$ (wL) $}}
		\UnaryInfC{$\Gamma,  A  \Rightarrow \Delta$}
		\DisplayProof
			&
		\AxiomC{$\Gamma  \Rightarrow \Delta$}
		\LeftLabel{\tiny{$ ( wR) $}}
		\UnaryInfC{$\Gamma \Rightarrow  \Delta, A$}
		\DisplayProof
		\end{tabular}
			\\[3 ex]
			\begin{tabular}{c c}
		\AxiomC{$\Gamma, A, A \Rightarrow \Delta$}
		\LeftLabel{\tiny{$ (cL) $}}
		\UnaryInfC{$\Gamma,  A  \Rightarrow \Delta$}
		\DisplayProof
		    &
		\AxiomC{$\Gamma \Rightarrow \Delta, A, A$}
		\LeftLabel{\tiny{$ (cR) $}}
		\UnaryInfC{$\Gamma \Rightarrow \Delta, A$}
		\DisplayProof
		\end{tabular}
        \\[3 ex]
	    
	    \AxiomC{$\Gamma_0 \Rightarrow \Delta_0, A$}
	    \AxiomC{$\Gamma_1, A \Rightarrow \Delta_1$}
		\LeftLabel{\tiny{$ (cut) $}}
		\BinaryInfC{$\Gamma_0, \Gamma_1 \Rightarrow \Delta_0, \Delta_1$}
		\DisplayProof
	\end{tabular}
\end{center}		
\begin{flushleft}
  		\textbf{Propositional Rules:}
\end{flushleft}
\begin{center}
  	\begin{tabular}{c c}
  		\AxiomC{$\Gamma_0, A  \Rightarrow \Delta_0 $}
  		\AxiomC{$\Gamma_1, B  \Rightarrow \Delta_1$}
  		\LeftLabel{{\tiny $\vee L$}} 
  		\BinaryInfC{$ \Gamma_0, \Gamma_1, A \lor B \Rightarrow \Delta_0, \Delta_1 $}
  		\DisplayProof
	  		&
	   	\AxiomC{$ \Gamma \Rightarrow \Delta, A_i$}
   		\RightLabel{{\tiny $ (i=0, 1) $}}
   		\LeftLabel{{\tiny $\vee R$}} 
   		\UnaryInfC{$ \Gamma \Rightarrow \Delta, A_0 \lor A_1$}
   		\DisplayProof
	   		\\[3 ex]
   		\AxiomC{$ \Gamma, A_i \Rightarrow \Delta$}
   		\RightLabel{{\tiny $ (i=0, 1) $}} 
   		\LeftLabel{{\tiny $\wedge L$}}  		
   		\UnaryInfC{$ \Gamma, A_0 \land A_1 \Rightarrow \Delta, C $}
   		\DisplayProof
	   		&
   		\AxiomC{$\Gamma_0  \Rightarrow \Delta_0, A$}
   		\AxiomC{$\Gamma_1  \Rightarrow  \Delta_1, B$}
   		\LeftLabel{{\tiny $\wedge R$}} 
   		\BinaryInfC{$ \Gamma_0, \Gamma_1 \Rightarrow \Delta_0, \Delta_1, A \land B $}
   		\DisplayProof
   			\\[3 ex]
   		\AxiomC{$ \Gamma_0 \Rightarrow A, \Delta_0 $}
  		\AxiomC{$ \Gamma_1, B \Rightarrow \Delta_1, C $}
  		\LeftLabel{{\tiny $\rightarrow L$}} 
   		\BinaryInfC{$ \Gamma_0, \Gamma_1, A \rightarrow B \Rightarrow \Delta_0, \Delta_1, C$}
   		\DisplayProof
   			&
   		\AxiomC{$ \Gamma, A \Rightarrow B, \Delta $}
   		\LeftLabel{{\tiny $\rightarrow R$}} 
   		\UnaryInfC{$ \Gamma \Rightarrow \Delta, A \rightarrow B$}
   		\DisplayProof
   		\\[3 ex]
   		\AxiomC{$ \Gamma \Rightarrow \Delta, A $}
   		\LeftLabel{\tiny {$\neg L$}} 
   		\UnaryInfC{$ \Gamma, \neg A \Rightarrow \Delta$}
   		\DisplayProof
   			&
   		\AxiomC{$ \Gamma, A \Rightarrow \Delta $}
   		\LeftLabel{{\tiny $\neg R$}} 
   		\UnaryInfC{$ \Gamma \Rightarrow \Delta, \neg A$}
   		\DisplayProof
	\end{tabular}
\end{center}
\begin{flushleft}
\textbf{Modal Rules:}
\end{flushleft}
\begin{center}
  	\begin{tabular}{c c c}
		\AxiomC{$ \{\sigma_r\}_{r \in R}, \{ \gamma_i, \Box_{n_i} \gamma_i\}_{i \in I} \Rightarrow A$}
		\LeftLabel{\tiny{$\Box_{4_h} R$}}
		\UnaryInfC{$\{\Box_n \sigma_r\}_{r \in R}, \{ \Box_{n_i} \gamma_i\}_{i \in I} \Rightarrow \Box_n A$}
		\DisplayProof
		&
		\AxiomC{$ \{\sigma_r\}_{r \in R}, \{ \gamma_i, \Box_{n_i} \gamma_i\}_{i \in I} \Rightarrow $}
		\LeftLabel{\tiny{$\Box_{D_h} R$}}
		\UnaryInfC{$\{\Box_n \sigma_r\}_{r \in R}, \{\Box_{n_i} \gamma_i\}_{i \in I} \Rightarrow $}
		\DisplayProof
		\\[2 ex]
		\end{tabular}
\end{center}
\begin{center}
  	\begin{tabular}{c c}	
        \AxiomC{$ \Gamma, A \Rightarrow \Delta$}
		\LeftLabel{\tiny{$\Box_h L$}}
		\UnaryInfC{$\Gamma, \Box_n A \Rightarrow \Delta$}
		\DisplayProof
		&
		\AxiomC{$ \{\sigma_r\}_{r \in R}, \{ \Box_{n_i} \gamma_i\}_{i \in I} \Rightarrow A$}
		\LeftLabel{\tiny{$\Box_{S_h} R$}}
		\UnaryInfC{$\{\Box_n \sigma_r\}_{r \in R}, \{ \Box_{n_i} \gamma_i\}_{i \in I} \Rightarrow \Box_n A$}
		\DisplayProof
		\\[2 ex]
	\end{tabular}
\end{center}
The condition of applying the rules $\Box_{4_h} R$, $\Box_{D_h} R$ and $\Box_{S_h} R$ is that for all $i \in I$, $n_i < n$.\\
The system $G(\mathbf{K4}_h)$ is the system that consists of the axioms, structural rules, propositional rules and the modal rule $\Box_{4_h} R$. $G(\mathbf{KD4}_h)$ is $G(\mathbf{K4}_h)$ plus the rule $\Box_{D_h} R$ and finally, $G(\mathbf{S4}_h)$ is the system $G(\mathbf{K4}_h)$ when we replace the rule $\Box_{4_h} R$ by $\Box_{S_h} R$ and add the rule $\Box_{h} L$.\\
\begin{thm}\label{t2-6} \cite{AK2} 
The systems $G(\mathbf{K4}_h)$, $G(\mathbf{KD4}_h)$ and $G(\mathbf{S4}_h)$ are equivalent to the logics $\mathbf{K4}_h$, $\mathbf{KD4}_h$ and $\mathbf{S4}_h$, respectively. Moreover, all of these sequent calculi have the cut elimination property.
\end{thm}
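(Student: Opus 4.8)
The plan is to establish the theorem in two stages. First I would show that each calculus \emph{with} the cut rule derives exactly the sequents $\Gamma \Rightarrow \Delta$ for which the corresponding Hilbert-style logic proves $\bigwedge \Gamma \rightarrow \bigvee \Delta$ (reading $\bigwedge \emptyset = \top$ and $\bigvee \emptyset = \bot$). Second I would prove that cut is admissible in each of the three calculi. Granting both, the cut-free calculi are equivalent to the Hilbert systems as well, which is exactly what the theorem asserts.

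For the equivalence, the direction from a sequent derivation to a Hilbert proof is an induction on the derivation: every axiom and every structural and propositional rule, including $(cut)$, is sound once a sequent $\Gamma \Rightarrow \Delta$ is read as $\bigwedge \Gamma \rightarrow \bigvee \Delta$, by pure propositional reasoning. The modal rules are the only ones needing the modal axioms. For $\Box_{4_h} R$ one takes a proof of $\bigwedge_r \sigma_r \wedge \bigwedge_i (\gamma_i \wedge \Box_{n_i} \gamma_i) \rightarrow A$, applies $(\mathbf{NC}_h)$ and distributes with $(\mathbf{K}_h)$, and then exploits the side condition $n_i < n$: by $(\mathbf{4}_h)$ together with iterated $(\mathbf{H})$ one gets $\Box_{n_i} \gamma_i \rightarrow \Box_n \Box_{n_i} \gamma_i$, and a further use of $(\mathbf{H})$ gives $\Box_{n_i} \gamma_i \rightarrow \Box_n \gamma_i$; assembling these yields $\bigwedge_r \Box_n \sigma_r \wedge \bigwedge_i \Box_{n_i} \gamma_i \rightarrow \Box_n A$. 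The rule $\Box_{D_h} R$ is handled the same way, using in addition $(\mathbf{D}_h)$; $\Box_{S_h} R$ needs only $(\mathbf{K}_h),(\mathbf{4}_h),(\mathbf{H})$, and $\Box_h L$ is justified by $(\mathbf{T}_h)$. For the converse direction one checks that the calculus with cut proves $\Rightarrow C$ for every classical tautology $C$ and for every instance of $\mathbf{H},\mathbf{K}_h,\mathbf{4}_h$ — each of the latter has a short derivation ending in one application of $\Box_{4_h} R$, with the antecedent boxes of index $n$ placed in the $\sigma$-part and those of lower index in the $\gamma,\Box\gamma$-part — and that $(\mathbf{MP})$ and $(\mathbf{NC}_h)$ are admissible: $(\mathbf{MP})$ by one $\rightarrow L$ and one $(cut)$, and $(\mathbf{NC}_h)$ by $\Box_{4_h} R$ with empty $\sigma$- and $\gamma$-parts. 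The same bookkeeping works for $\mathbf{KD4}_h$ (adding the instances of $\mathbf{D}_h$, whose sequent $\Box_n \bot \Rightarrow$ comes from $\Box_{D_h} R$) and for $\mathbf{S4}_h$ (adding $\Box_n A \Rightarrow A$ from $\Box_h L$).

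Cut elimination I would prove by the usual Gentzen double induction: a topmost cut is reduced by induction on the pair consisting of the complexity of the cut formula and the sum of the heights of its two (cut-free) premise derivations. When the cut formula is not principal in the last inference of one premise, the cut is permuted upward through that inference; one must also check this when the cut formula occurs only as a side formula of a modal rule (e.g.\ as some $\Box_{n_i} \gamma_i$, which already occurs in the premise of the rule, so the cut moves into that premise), and contraction on the cut formula is handled in the standard way, e.g.\ by passing to a mix rule. When the cut formula is principal on both sides, the propositional key cases are standard, and the modal key case is the one to watch: there $\Box_n B$ is introduced on the right by $\Box_{4_h} R$ (resp.\ $\Box_{S_h} R$) in the left premise. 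In $G(\mathbf{S4}_h)$ it is introduced on the left by $\Box_h L$, and the cut is replaced by a single cut on $B$ — of strictly smaller complexity, hence handled by the main induction hypothesis — followed by applications of $\Box_h L$ to restore the boxes on the $\sigma_r$. In $G(\mathbf{K4}_h)$ and $G(\mathbf{KD4}_h)$ there is no left rule for $\Box$, so $\Box_n B$ can be principal on the left only via an identity axiom or weakening (both trivial) or as a side box of a modal inference $\rho$ in the right premise whose own principal box $\Box_{n'} A'$ has index $n'>n$; then one first performs a cut on $\Box_n B$ whose right subderivation is the premise of $\rho$ (same complexity but strictly smaller height, so the secondary hypothesis applies), then a cut on $B$ with the premise of the left-premise modal inference (smaller complexity), and finally reassembles a single modal inference with principal box $\Box_{n'} A'$.

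The main obstacle is exactly this reassembly and the analogous permutation cases: one has to verify that every modal inference produced by a reduction still satisfies the side condition that its principal box has strictly larger index than all boxes occurring in the antecedent, and that the extra copies of formulas created by the auxiliary cuts can be merged by contraction without destroying the $\{\gamma_i, \Box_{n_i} \gamma_i\}$ pattern demanded by the rule. This goes through because in the critical configuration the index $n$ of the cut formula sits strictly below the index $n'$ of the outer modal inference and strictly above every side index coming from the left premise, so strictness propagates; it is precisely here that the hierarchical index restriction built into $\mathcal{L}_{\infty}^{\Box}$ (Definition \ref{t2-1}) and into the rules does the real work, and getting the case analysis and the order of the auxiliary cuts right is the only genuine subtlety.
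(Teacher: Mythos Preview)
The paper does not contain a proof of this theorem: it is quoted verbatim from \cite{AK2} and used as a black box (the sequent calculi and cut elimination are only needed later, in the completeness half of Theorem~\ref{t3-16}). So there is nothing in the present paper to compare your argument against.

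That said, what you outline is the expected route and is essentially what one would find in \cite{AK2}: equivalence of the Hilbert and Gentzen presentations by the obvious two-way simulation, followed by a Gentzen-style cut-elimination with the modal key case as the only nontrivial reduction. Your treatment of that key case is on the right track; the one place where your wording could mislead is calling $\Box_n B$ ``principal on the left'' in $G(\mathbf{K4}_h)$/$G(\mathbf{KD4}_h)$ when it occurs as a side box $\Box_{n_i}\gamma_i$ of a later modal inference. It is not principal there in the usual sense; rather, the point is that in these calculi the \emph{only} way a boxed formula in the antecedent can be non-parametric in the last rule of the right premise is as such a side box, and your reduction (cut on $\Box_n B$ into the premise of that modal rule, then cut on $B$, then reassemble) is the correct one. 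The index bookkeeping you flag --- that $n$ sits strictly between the side indices coming from the left derivation and the principal index $n'$ of the outer modal rule --- is exactly what makes the reassembled inference legal, and it is indeed the only genuinely new feature compared with ordinary $\mathbf{K4}$/$\mathbf{S4}$ cut elimination.
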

Using these sequent calculi we also proved the strong disjunction property.
\begin{dfn}\label{t2-7}
Logic $L$ has the strong disjunction property if for all formulas $\Box_n A$ and $\Box_m B$, if $L \vdash \Box_n A \vee \Box_m B$ then $L \vdash A$ or $L \vdash B$.
\end{dfn}
\begin{thm}\label{t2-8} (Strong disjunction property) \cite{AK2} 
All of the logics $\mathbf{K4}_h$, $\mathbf{KD4}_h$, $\mathbf{S4}_h$ and $\mathbf{GL}_h$ have strong disjunction property.
\end{thm}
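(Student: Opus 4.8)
The strategy is to deduce the property from a proof-theoretic analysis of the cut-free sequent calculi. For $\mathbf{K4}_h$, $\mathbf{KD4}_h$ and $\mathbf{S4}_h$ I would argue inside $G(\mathbf{K4}_h)$, $G(\mathbf{KD4}_h)$ and $G(\mathbf{S4}_h)$, invoking Theorem~\ref{t2-6}. For $\mathbf{GL}_h$ I would use the analogous cut-free calculus $G(\mathbf{GL}_h)$ from \cite{AK2}, whose modal rule is the hierarchical L\"{o}b rule obtained from $\Box_{4_h} R$ by retaining $\Box_n A$ in the antecedent of the premise (so that with empty side-contexts it reads: from $\Box_n A \Rightarrow A$ infer $\Rightarrow \Box_n A$); an alternative for this case is a gluing argument on the stratified Kripke models of $\mathbf{GL}_h$.

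Before the main argument I would record two facts, each by a straightforward induction on cut-free derivations. First, \emph{the empty sequent $\Rightarrow$ is not cut-free derivable} in any of these systems: no rule has a conclusion of that shape except $\Box_{D_h} R$ with empty side-contexts (present only in $G(\mathbf{KD4}_h)$), whose premise is again $\Rightarrow$, so a minimal-height derivation is impossible. Second, \emph{$\vee R$ is invertible}: if $\Gamma \Rightarrow \Delta, C_0 \vee C_1$ is cut-free derivable then so is $\Gamma \Rightarrow \Delta, C_0, C_1$. The only point to check here is that $C_0 \vee C_1$ can never be a side formula of a modal rule, since the succedent of the conclusion of any modal rule is either empty or a single boxed formula; so when the last rule is modal the formula $C_0 \vee C_1$ is simply not present, and in all other cases the inversion is pushed to the premises (using primitive weakening to recover $C_{1-i}$ when the last rule is the $\vee R$ acting on $C_0 \vee C_1$).

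The core is the following claim, proved by induction on a cut-free derivation: \emph{if $\Rightarrow \Box_{n_1} A_1, \dots, \Box_{n_k} A_k$ with $k \geq 1$ is cut-free derivable in $G(\mathbf{S4}_h)$, $G(\mathbf{K4}_h)$ or $G(\mathbf{KD4}_h)$, then $\Rightarrow A_j$ is cut-free derivable for some $j$}; for $G(\mathbf{GL}_h)$ one gets instead that $\Box_{n_j} A_j \Rightarrow A_j$ is derivable for some $j$. Since the antecedent of the endsequent is empty, the last rule is not an axiom, not a left rule, and not $wL$, $cL$ or $\Box_h L$; it is also not one of $\vee R$, $\wedge R$, $\rightarrow R$, $\neg R$, since each of these would place a non-boxed principal formula in the succedent. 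If it is $wR$ or $cR$, the succedent of the premise still consists only of boxed formulas and is nonempty (for $wR$ this uses the first auxiliary fact), so the induction hypothesis applies. Finally, if the last rule is a modal rule, then --- because its conclusion has a succedent of size at most $1$ --- we must have $k = 1$ and the rule is $\Box_{4_h} R$, $\Box_{S_h} R$ or the L\"{o}b rule (not $\Box_{D_h} R$, whose conclusion has empty succedent); and because the conclusion has empty antecedent, the side-sets $R$ and $I$ must be empty, so the premise is exactly $\Rightarrow A_1$ (resp.\ $\Box_{n_1} A_1 \Rightarrow A_1$), as required.

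Putting these pieces together: assume $L \vdash \Box_n A \vee \Box_m B$ for $L$ one of the four logics. By Theorem~\ref{t2-6} (resp.\ its $\mathbf{GL}_h$-analogue) there is a cut-free derivation of $\Rightarrow \Box_n A \vee \Box_m B$; by invertibility of $\vee R$ we obtain a cut-free derivation of $\Rightarrow \Box_n A, \Box_m B$; and by the core claim either $\Rightarrow A$ or $\Rightarrow B$ is derivable in the first three systems, giving $L \vdash A$ or $L \vdash B$. In the $\mathbf{GL}_h$ case the claim gives, say, $\Box_n A \Rightarrow A$, i.e.\ $\mathbf{GL}_h \vdash \Box_n A \rightarrow A$; applying $(\mathbf{NC}_h)$ yields $\mathbf{GL}_h \vdash \Box_{n+1}(\Box_n A \rightarrow A)$ (which is well-formed since $n+1$ exceeds every box-index in $\Box_n A \rightarrow A$), and then $(\mathbf{L}_h)$ together with $(\mathbf{MP})$ gives $\mathbf{GL}_h \vdash \Box_n A$ and hence $\mathbf{GL}_h \vdash A$; symmetrically if the claim produces $\Box_m B \Rightarrow B$. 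I expect the main obstacle to be exactly the modal-rule case of the core claim --- verifying that the rigid shape of the modal rules (empty or singleton succedent, side-contexts forced empty once the antecedent is empty) really does collapse the derivation to a single application of the right modal rule --- together with the extra internalization via $(\mathbf{L}_h)$ needed in the $\mathbf{GL}_h$ case to pass from $\vdash \Box_n A \rightarrow A$ back to $\vdash A$, and the fact that this case presupposes a cut-free calculus for $\mathbf{GL}_h$ (the hierarchical Valentini-style calculus).
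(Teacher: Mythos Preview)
The paper does not itself prove this theorem; it is imported from \cite{AK2}, and only the calculi for $\mathbf{K4}_h$, $\mathbf{KD4}_h$ and $\mathbf{S4}_h$ (Theorem~\ref{t2-6}) are quoted here. Your proof-theoretic route via cut elimination is the natural one and is presumably what \cite{AK2} does for those three logics: invert $\vee R$, then observe that in a cut-free derivation of $\Rightarrow \Box_{n_1}A_1,\dots,\Box_{n_k}A_k$ the only possible last rules are $wR$/$cR$ (handled by induction, the underivability of $\Rightarrow$ covering the degenerate case) or a single-succedent modal rule, which forces $k=1$ and empty side-sets, yielding the premise $\Rightarrow A_1$. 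That part is correct as stated.

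For $\mathbf{GL}_h$ you rely on a hierarchical Valentini-style calculus with cut elimination that is \emph{not} supplied in this paper (Theorem~\ref{t2-6} explicitly omits $\mathbf{GL}_h$), so this is an external input you are assuming from \cite{AK2}; you flag this honestly. Granting it, your extraction of $\mathbf{GL}_h \vdash A$ from the premise $\Box_n A \Rightarrow A$ is correct: you get $\vdash \Box_n A \to A$, necessitate to $\vdash \Box_{n+1}(\Box_n A \to A)$, apply $(\mathbf{L}_h)$ and $(\mathbf{MP})$ to obtain $\vdash \Box_n A$, and then a second application of $(\mathbf{MP})$ with the already-established $\vdash \Box_n A \to A$ yields $\vdash A$. (You wrote ``and hence $\mathbf{GL}_h \vdash A$'' without spelling out this last modus ponens; it is worth making explicit, since $\mathbf{GL}_h$ has no reflection axiom and the step is not automatic.) So the only real gap relative to what this paper provides is the existence and cut-freeness of $G(\mathbf{GL}_h)$, exactly as you note.
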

And finally, the natural classes of provability models and the soundness-completeness theorem:
\begin{dfn}\label{t2-9}
\begin{itemize}
\item[$(i)$]
The class of all provability models will be denoted by $\mathbf{PrM}$.
\item[$(ii)$]
A provability model $(M,\{T_n\}_{n=0}^{\infty})$ is called consistent if for any $n$, $M$ thinks that $T_n$ is consistent and $T_{n+1} \vdash \Cons(T_n)$, i.e. $M \vDash \Cons(T_n)$ and $M \vDash \Pr_{T_{n+1}}(\Cons(T_n))$. Moreover, the class of all consistent provability models will be denoted by $\mathbf{Cons}$.
\item[$(iii)$]
A provability model $(M,\{T_n\}_{n=0}^{\infty})$ is reflexive if for any $n$, $M$ thinks that $T_n$ is sound and $T_{n+1} \vdash \Rfn(T_n)$, i.e. $M \vDash \Pr_{T_n}(A) \rightarrow A$ and $M \vDash \Pr_{T_{n+1}}(\Pr_{T_n}(A) \rightarrow A)$ for any sentence $A$. Moreover, the class of all reflexive provability models will be denoted by $\mathbf{Ref}$.
\item[$(iv)$]
A provability model $(M, \{T_n\}_{n=0}^{\infty})$ is constant if for any $n$ and $m$, $(M, \{T_n\}_{n=0}^{\infty})$ thinks that $T_n=T_m$, i.e. $M \vDash \Pr_{T_m}(A) \leftrightarrow \Pr_{T_n}(A)$ and $M \vDash \Pr_{T_0}(\Pr_{T_m}(A) \leftrightarrow \Pr_{T_n}(A))$ for any sentence $A$. The class of all constant provability models will be denoted by $\mathbf{Cst}$.
\end{itemize}
\end{dfn}

\begin{thm}\label{t2-10}(Soundness-Completeness)\cite{AK2}
\begin{itemize}
\item[$(i)$]
$\Gamma \vdash_{\mathbf{K4}_h} A$ iff $\mathbf{PrM} \vDash \Gamma \Rightarrow A$.
\item[$(ii)$]
$\Gamma \vdash_{\mathbf{KD4}_h} A$ iff $\mathbf{Cons} \vDash \Gamma \Rightarrow A$. 
\item[$(iii)$]
$\Gamma \vdash_{\mathbf{S4}_h} A$ iff $\mathbf{Ref} \vDash \Gamma \Rightarrow A$.
\item[$(iv)$]
$\Gamma \vdash_{\mathbf{GL}_h} A$ iff $\mathbf{Cst} \vDash \Gamma \Rightarrow A$.
\end{itemize}
\end{thm}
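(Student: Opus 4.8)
The plan is to treat soundness and completeness separately, reducing completeness to a stratified Solovay construction built on top of a finite model property for the modal logics.

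For soundness I would induct on $\mathbf{L}$-derivations and check that every theorem is valid in the corresponding class of provability models. Propositional tautologies hold because $I\Sigma_1$ proves them and $M \vDash I\Sigma_1$; $\mathbf{K}_h$ is formalized distributivity of provability; $\mathbf{H}$ follows from the fact that a provability model satisfies $I\Sigma_1 \vdash (T_n \subseteq T_{n+1})$; and $\mathbf{4}_h$ follows from provable $\Sigma_1$-completeness applied to the $\Sigma_1$ sentence $\Pr_{T_n}(A^\sigma)$ together with $I\Sigma_1 \subseteq T_{n+1}$. The remaining axioms are matched to the defining clauses of the classes in Definition~\ref{t2-9}: $\mathbf{D}_h$ to ``$M$ thinks $T_n$ is consistent'', $\mathbf{T}_h$ to ``$M$ thinks $T_n$ is sound'', and $\mathbf{L}_h$ --- which across levels is not merely a derivability condition --- to constancy, under which it degenerates to formalized L\"{o}b for $T_n$ (note that already internally $\mathbf{GL}_h \vdash \Box_{n+1} A \leftrightarrow \Box_n A$, applying $\mathbf{L}_h$ to the tautology $A \rightarrow (\Box_n A \rightarrow A)$, so the hierarchy collapses in $\mathbf{GL}_h$ as it should). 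For the rules, $\mathbf{MP}$ is immediate, and for $\mathbf{NC}_h$ I would prove by a secondary induction that $\mathbf{L} \vdash B$ implies $T_k \vdash B^{\sigma}$ for every substitution $\sigma$ and every admissible index $k$; then the true $\Sigma_1$ sentence $\Pr_{T_k}(B^{\sigma})$ holds in $M$ by $\Sigma_1$-completeness. The only delicate point is that $\mathbf{NC}_h$ forces ``meta'' instances such as $\Box_{n+1} \neg \Box_n \bot$ in $\mathbf{KD4}_h$, whose validity is precisely the second half of the class definitions --- $T_{n+1} \vdash \Cons(T_n)$, $T_{n+1} \vdash \Rfn(T_n)$, and $T_0 \vdash (\Pr_{T_m} \leftrightarrow \Pr_{T_n})$ --- which is exactly why those clauses appear in Definition~\ref{t2-9}.

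For completeness I would pass through Kripke semantics. Using the cut-free calculi of Theorem~\ref{t2-6} (and the analogous treatment of $\mathbf{GL}_h$), I would first establish completeness of each logic with respect to a class of finite hierarchical Kripke frames $(W, \{R_n\}_{n \le N}, V)$ satisfying $R_0 \subseteq \cdots \subseteq R_N$, the cross-level condition that $w R_{n+1} v$ and $v R_n u$ imply $w R_n u$, and --- according to $\mathbf{L}$ --- converse well-foundedness of each $R_n$ (for $\mathbf{GL}_h$), seriality (for $\mathbf{KD4}_h$), or reflexivity (for $\mathbf{S4}_h$); here $N$ is the largest box index appearing, so only finitely many modalities are relevant. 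Given $\Gamma \not\vdash_{\mathbf{L}} A$ (for definiteness with $\Gamma$ finite; the infinite case is handled the same way starting from a recursively presented infinite counter-model), fix such a counter-model with a root $w_0$ forcing $\Gamma$ but refuting $A$. Then I would run a Solovay argument: adjoin a new root $r$, use the fixed-point lemma to define a primitive recursive ``climbing function'' $\ell : \mathbb{N} \to W \cup \{r\}$ whose transitions along $R_n$ are triggered by provability in a theory $T_n$, and set $T_n := I\Sigma_1$ together with the axioms describing the behaviour of $\ell$ restricted to the subframe carried by $R_0, \dots, R_n$ --- so that $I\Sigma_1 \subseteq T_n \subseteq T_{n+1}$ provably and $T_n$ ``sees'' exactly the level-$\le n$ structure. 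With $\sigma(p) := \bigvee_{w \in V(p)} (\ell = w)$ one proves the truth lemma ($T_n \vdash (\ell = w \rightarrow B^{\sigma})$ when $w$ forces $B$, and $T_n \vdash (\ell = w \rightarrow \neg B^{\sigma})$ otherwise, with $n$ the relevant level), the modal step resting on the usual Solovay facts $T_n \vdash (\ell = w \rightarrow \Pr_{T_n} \bigvee_{w R_n v} (\ell = v))$ and the $M$-consistency of $T_n + (\ell = v)$ for $w R_n v$. Evaluating at the appropriate $M$ --- the standard model for $\mathbf{K4}_h$, $\mathbf{KD4}_h$, $\mathbf{GL}_h$, and a nonstandard model in which every $T_n$ is sound for $\mathbf{S4}_h$ --- yields $M \vDash \Gamma^{\sigma}$ and $M \not\vDash A^{\sigma}$; and one checks the pair lies in the required class: arbitrary for $\mathbf{PrM}$; consistent with $T_{n+1} \vdash \Cons(T_n)$ for $\mathbf{Cons}$ (from seriality and the stratification); all $T_n$ literally equal, hence constant, for $\mathbf{Cst}$ (consistent with the internal collapse in $\mathbf{GL}_h$); reflexive for $\mathbf{Ref}$.

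I expect the main obstacle to be making the Solovay construction genuinely hierarchical: a single climbing function must simultaneously encode the correct provability behaviour at every level, each $T_n$ certifying exactly the $R_{\le n}$-moves, while $I\Sigma_1 \subseteq T_n \subseteq T_{n+1}$ remains provable and --- for the decorated classes --- consistency, soundness, or constancy propagate up the hierarchy together with the ``$T_{n+1}$ proves [property of $T_n$]'' clauses. The sharpest single difficulty is the $\mathbf{S4}_h$ case: since no sound r.e.\ extension of $I\Sigma_1$ is reflective over $\mathbb{N}$, completeness with respect to $\mathbf{Ref}$ cannot use the standard model, so one must instead build a nonstandard $M$ in which all the $T_n$ are true --- a reflexive Solovay argument in the spirit of the $\mathbf{S4}$/$\mathbf{GLS}$ arithmetical completeness theorems, carried out uniformly along the hierarchy.
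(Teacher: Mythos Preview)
The paper does not prove Theorem~\ref{t2-10}: it is stated in the Preliminaries and imported wholesale from \cite{AK2}, with no argument given here. There is therefore no in-paper proof to compare your proposal against; everything in Section~2 up to and including this theorem is background quoted from the companion paper.

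That said, your outline is a reasonable sketch of what such a proof should contain, and is presumably close in spirit to \cite{AK2}. The soundness side is routine, and your matching of axioms and their necessitations to the clauses of Definition~\ref{t2-9} --- including the observation that $\mathbf{GL}_h$ internally collapses all box levels via $\mathbf{L}_h$ applied to $A \rightarrow (\Box_n A \rightarrow A)$, so that under constancy $\mathbf{L}_h$ reduces to ordinary formalized L\"{o}b --- is correct. On the completeness side, passing through finite stratified Kripke frames (justified by the cut-free calculi of Theorem~\ref{t2-6}) and then running a level-indexed Solovay construction is the natural route; you are also right that the $\mathbf{S4}_h$ case cannot use the standard model and requires a reflexive Solovay argument producing a nonstandard $M$ in which each $T_n$ is sound, whereas the other three cases can work over $\mathbb{N}$. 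The one place where your sketch is thin is exactly where you flag it: arranging a single climbing function so that each $T_n$ certifies precisely the $R_{\le n}$-moves while the inclusions $I\Sigma_1 \subseteq T_n \subseteq T_{n+1}$ and the extra clauses ($T_{n+1} \vdash \Cons(T_n)$, $T_{n+1} \vdash \Rfn(T_n)$) are themselves provable in $I\Sigma_1$ is genuine work, not a formality, and is where the content of \cite{AK2} lies.
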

\section{Russellian Propositional Logics}
In this section we will define an appropriate language to capture the hierarchical nature of intuitionism and its BHK interpretation. Then we will define some natural theories in this language and finally we will use the G\"{o}del translation to find a connection between these propositional logics and the modal systems introduced in the Preliminaries.
\begin{dfn}\label{t3-1}
Consider the language of propositional logics in which the implication is replaced by infinitely many implications, $\{\rightarrow_n\}_{n=1}^{\infty}$. Define the set of formulas, $\mathcal{L}_{\infty}$, as the least set of expressions that includes all atomic variables, $\top$ and $\bot$, closed under conjunction and disjunction and finally closed under the following operation: If $A, B \in \mathcal{L}_{\infty}$, and $n$ is strictly greater than all numbers occurring as indices of implications in $A$ and $B$, then $A \rightarrow_n B \in \mathcal{L}_{\infty}$.  
\end{dfn}
\begin{rem}\label{t3-2}
For the simplicity, negations are not assumed as primitives in the language. But for any type of implication, we can define $\neg_n A$ as $A \rightarrow_n \bot$.
\end{rem}
Just like the modal language introduced in the Preliminaries, we assume that the indices of implications should be in an increasing order. The reason again is that we think that the situation in which a theory speaks about itself or the higher levels in the hierarchy should be considered as a syntactical error. This is actually the essence of the Russellian approach discussed in the Introduction. To have an example, notice that the expression $((p \rightarrow_1 q)\wedge r) \rightarrow_2 s $ is a formula in the language $\mathcal{L}_{\infty}$, while the expression $(p \rightarrow_1 (q \rightarrow_1 r))$ is not. The reason is that in the second expression the theory $T_1$ speaks about the provability behavior of itself which is not valid.\\

To introduce some formal systems in this language, consider the following set of natural deduction rules:

\begin{flushleft}
   \textbf{Propositional Rules:}
   \end{flushleft}
\begin{center}
  	\begin{tabular}{c c}
		\AxiomC{$ A$}
	   	\AxiomC{$ B$}
	   	\LeftLabel{\tiny{$\wedge I$}}
	   	\BinaryInfC{$A \wedge B $}
	   	\DisplayProof
		&
		\AxiomC{$ A \wedge B $}
		\LeftLabel{\tiny{$\wedge E$}}
		\UnaryInfC{$ A$}
		\DisplayProof
			
		\AxiomC{$ A \wedge B $}
		\LeftLabel{\tiny{$\wedge E$}}
		\UnaryInfC{$ B$}
		\DisplayProof
	   		\\[4 ex]
	   		
        \AxiomC{$  A $}
        \LeftLabel{\tiny{$\vee I$}}
   		\UnaryInfC{$ A \vee B$}
   		\DisplayProof
   		
   		\AxiomC{$  B $}
   		\LeftLabel{\tiny{$\vee I$}}
   		\UnaryInfC{$ A \vee B$}
   		\DisplayProof
			&
   		\AxiomC{$A \lor B$}
        \AxiomC{[$A$]}
        \noLine
   		\UnaryInfC{$\mathcal{D}$}
        \noLine
        \UnaryInfC{$C$}
        
        \AxiomC{[$B$]}
        \noLine
   		\UnaryInfC{$\mathcal{D'}$}
        \noLine
        \UnaryInfC{$C$}
        \LeftLabel{\tiny{$\vee E$}}
        \TrinaryInfC{$C$}
        \DisplayProof
    		\\[4 ex]
    		
        \AxiomC{[$A$]}
   		\noLine
   		\UnaryInfC{$\mathcal{D}$}
   		\noLine
   		\UnaryInfC{$B$}
   		\LeftLabel{\tiny{$\rightarrow I$}}
   		\UnaryInfC{$ A \rightarrow_n B$}
   		\DisplayProof 
   		&   		
   		\AxiomC{$  \bot $}
    		\LeftLabel{\tiny{$\bot$}}
   		\UnaryInfC{$ A$}
   		\DisplayProof
   		\\[6 ex]
   \end{tabular}
   \end{center}

\begin{flushleft}
   \textbf{Formalized Rules:}
   \end{flushleft}
\begin{center}
\begin{tabular}{c c}
   		   		
   		\AxiomC{$A \rightarrow_n B$}
    		\AxiomC{$A \rightarrow_n C$}
    		\LeftLabel{\tiny{$(\wedge I)_f$}}
    		\BinaryInfC{$A \rightarrow_n B \wedge C$}
    		\DisplayProof
    		&
    		\AxiomC{$A \rightarrow_n C$}
    		\AxiomC{$B \rightarrow_n C$}
    		\LeftLabel{\tiny{$(\vee E)_f$}}
    		\BinaryInfC{$A \vee B \rightarrow_n C$}
    		\DisplayProof
    		\\[4 ex]
    		
\end{tabular}
\end{center}
\begin{center}
      \begin{tabular}{c}
  
    		\AxiomC{$A \rightarrow_n B$}
    		\AxiomC{$B \rightarrow_n C$}
    		\LeftLabel{\tiny{$tr_f$}}
    		\BinaryInfC{$A \rightarrow_n C$}
    		\DisplayProof
    		\\[4 ex]
   		
	\end{tabular}
\end{center}
\begin{flushleft}
   \textbf{Structural Rules:}
   \end{flushleft}
   \begin{center}
   \begin{tabular}{c c}
   		\AxiomC{$A$}
   		\AxiomC{[$A$]}
   		\noLine
   		\UnaryInfC{$\mathcal{D}$}
   		\noLine
   		\UnaryInfC{$B$}
   		\LeftLabel{\tiny{$tr$}}
   		\BinaryInfC{$B$}
   		\DisplayProof 
   		&
   		\AxiomC{$ A \rightarrow_n B $}
    		\LeftLabel{\tiny{$H$}}
   		\UnaryInfC{$A \rightarrow_m B$}
   		\DisplayProof
   		
   		\end{tabular}
\end{center}
Moreover, consider the following set of rules:
\begin{center}
  	\begin{tabular}{c c c}
		\AxiomC{$ A$}
		\AxiomC{$ \neg_n A$}
		\LeftLabel{\tiny{$C$}}
		\BinaryInfC{$\bot$}
		\DisplayProof
		&	
		\AxiomC{$A $}
		\AxiomC{$A \rightarrow_n B$}
		\LeftLabel{\tiny{$R$}}
		\BinaryInfC{$ B$}
		\DisplayProof
		\\[3 ex]
		
		\AxiomC{}
		\LeftLabel{\tiny{$D$}}
		\UnaryInfC{$A \vee \neg_n A$}
		\DisplayProof
		&
        \AxiomC{$(A \wedge (A \rightarrow_n B)) \rightarrow_{n+1} B$}
        \LeftLabel{\tiny{$L$}}
        \UnaryInfC{$A \rightarrow_n B$}
        \DisplayProof
		\\[3 ex]
	\end{tabular}
\end{center}
The condition for the rule $H$ is that $m \geq n$ and the condition of applying the rule $\rightarrow I$ is that $n$ should be strictly greater than all the indices occurred in the hypothesis of the deduction, including $A$.\\

The logic $\mathbf{BPC}_h$ is defined as the system consists of the propositional rules, the structural rules and the formalized rules. Then $\mathbf{EBPC}_h$ is defined as $\mathbf{BPC}_h + C$, $\mathbf{FPL}_h$ is defined as $\mathbf{BPC}_h + L$, $\mathbf{IPC}_h$ is defined as $\mathbf{BPC}_h + R$ and finally $\mathbf{CPC}_h$ is defined as $\mathbf{IPC}_h + D$.
\begin{rem}\label{t3-3}
Consider the following rules:
\begin{center}
  	\begin{tabular}{c c}
		
		\AxiomC{$\top \rightarrow_n \bot$}
		\LeftLabel{\tiny{$C'$}}
		\UnaryInfC{$\bot$}
		\DisplayProof
		&	
		\AxiomC{$\top \rightarrow_n A$}
		\LeftLabel{\tiny{$R'$}}
		\UnaryInfC{$ A$}
		\DisplayProof
		\\[3 ex]
		
	\end{tabular}
\end{center}
It is possible to define $\mathbf{EBPC}_h$ as $\mathbf{BPC}_h + D'$ and define $\mathbf{IPC}_h$ as $\mathbf{BPC}_h + R'$. It is clear that $D'$ and $R'$ are special cases of $D$ and $R$, respectively. Therefore it remains to show that $D'$ and $R'$ can simulate $D$ and $R$, respectively. The following proofs show that it is the case:
\begin{center}
  	\begin{tabular}{c c}
	
		\AxiomC{$ A$}
		\UnaryInfC{$\top \rightarrow_n A $}
		\AxiomC{$ A \rightarrow_n \bot$}
		\BinaryInfC{$\top \rightarrow_n \bot$}
		\LeftLabel{\tiny{$C'$}}
		\UnaryInfC{$\bot$}
		\DisplayProof
		&	
		\AxiomC{$ A$}
		\UnaryInfC{$\top \rightarrow_n A $}
		\AxiomC{$A \rightarrow_n B$}
		\BinaryInfC{$\top \rightarrow_n B$}
		\LeftLabel{\tiny{$R'$}}
		\UnaryInfC{$ B$}
		\DisplayProof
		
	\end{tabular}
\end{center}
\end{rem}
\begin{rem}\label{t3-4}
The usual logics $\mathbf{BPC}$, $\mathbf{EBPC}$, $\mathbf{IPC}$ and $\mathbf{CPC}$ are defined just like their counterparts replacing all $\rightarrow_n$ by $\rightarrow$. (See \cite{Vi}, \cite{Ar}.) Moreover, the logic $\mathbf{FPL}$ is defined as $\mathbf{BPC}$ plus the following rule (See \cite{Vi}):
\begin{center}
  	\begin{tabular}{c}	
        \AxiomC{$(\top \rightarrow A) \rightarrow A$}
        \UnaryInfC{$\top \rightarrow A$}
        \DisplayProof
		\\[3 ex]
	\end{tabular}
\end{center}
It is also possible to define $\mathbf{FPL}$ as $\mathbf{BPC}$ plus the rule:
\begin{center}
  	\begin{tabular}{c}	
        \AxiomC{$(A \wedge (A \rightarrow B)) \rightarrow B$}
        \UnaryInfC{$A \rightarrow B$}
        \DisplayProof
		\\[3 ex]
	\end{tabular}
\end{center}
The equivalence of these two definitions, is based on the fact that the first rule is a special case of the second rule when we have $A =\top$ and the following proof which shows that the first is also powerful enough to simulate the second:\\

\begin{center}
\begin{tabular}{c}
        \AxiomC{$ $}
        \doubleLine
        \UnaryInfC{$A \rightarrow \top$}
        
        \AxiomC{$[\top \rightarrow (A \rightarrow B)]^2$}
        
	    \AxiomC{$[A]^1$}
	    \doubleLine
	    \UnaryInfC{$\top \rightarrow A$}
	    
        \BinaryInfC{$\top \rightarrow (A \wedge (A \rightarrow B))$}
        \AxiomC{$[(A \wedge (A \rightarrow B)) \rightarrow B]^3$}
        \BinaryInfC{$\top \rightarrow B$}
        
        \BinaryInfC{$A \rightarrow B$}
        \LeftLabel{\tiny{$\rightarrow I_2$}}
        \UnaryInfC{$(\top \rightarrow (A \rightarrow B)) \rightarrow (A \rightarrow B)$}
        \UnaryInfC{$\top \rightarrow (A \rightarrow B)$}
        \LeftLabel{\tiny{$(*)$}}
        \doubleLine
        \UnaryInfC{$A \rightarrow B$}
        \LeftLabel{\tiny{$\rightarrow I_1$}}
        \UnaryInfC{$A \rightarrow ((A \rightarrow B))$}
        \doubleLine
        \UnaryInfC{$A \rightarrow (A \wedge (A \rightarrow B))$}
        
        \AxiomC{$[(A \wedge (A \rightarrow B)) \rightarrow B]^3$}
        \insertBetweenHyps{\hskip -100pt}
        \BinaryInfC{$(A \rightarrow B)$}
        \DisplayProof
\end{tabular}
\end{center}
Notice that the double lines mean simple sub-proofs that we do not mention and $(*)$ is the sub-proof which proves 
\[
A, (\top \rightarrow (A \rightarrow B)), ((A \wedge (A \rightarrow B)) \rightarrow B) \vdash A \rightarrow B
\] 
\end{rem}
\begin{rem}\label{t3-5}
In the untyped case of logics introduced in Remark \ref{t3-4}, there is no need to add the rule $tr$, simply because it is admissible. It is enough to put a proof for $\Gamma, A \vdash B$ under the proof of $\Gamma' \vdash A$ to have a proof for $\Gamma, \Gamma' \vdash B$. Unfortunately, this is not the case for the typed version. The reason is simple: If we put the proof of $\Gamma' \vdash A$ under the proof of $\Gamma, A \vdash B$ it means that we keep the structure of two proofs, by changing the premises to the set $\Gamma \cup \Gamma'$. But then it is possible that some applications of the rule $\rightarrow I$ become invalid simply because it is possible to have formulas in the set $\Gamma'$ with greater indices. To have an example, consider the following trees:\\

\begin{center}
  	\begin{tabular}{c c}	
  	    \AxiomC{$p \wedge (\top \rightarrow_2 q)$}
        \UnaryInfC{$p$}
  	    \AxiomC{$[p]$}
  	    \LeftLabel{\tiny{$(*)$}}
  	    \UnaryInfC{$\top \rightarrow_1 p$}
  	    \LeftLabel{\tiny{$tr$}}
        \BinaryInfC{$\top \rightarrow_1 p$}
        \DisplayProof
        \; \; \; \; \;
        &	
  	    \AxiomC{$p \wedge (\top \rightarrow_2 q)$}
        \UnaryInfC{$p$}
        \LeftLabel{\tiny{$(**)$}}
  	    \UnaryInfC{$\top \rightarrow_1 p$}
        \DisplayProof
		\\[3 ex]
	\end{tabular}
\end{center}

The left tree is a valid proof while the right one is not. The reason is that in applying the rule $\rightarrow I$ in $(**)$, the index $1$ is not greater than all the indices occurring in the premises which is $2$, in this case. But in the left tree, applying $\rightarrow I$ in $(*)$ is valid because there is no index in the premises which in this case is the empty set.
\end{rem}
In the following we will define the provability interpretation (BHK interpretation) of the statements in the propositional language $\mathcal{L}_{\infty}$:
\begin{dfn}\label{t3-6}
Let $(M, \{T_n\}_{n=0}^{\infty})$ be a provability model and $A \in \mathcal{L}_{\infty}$ a formula. Then by an arithmetical substitution $\sigma$ we mean a function from atomic formulas to the set of arithmetical sentences. Moreover, define $A^{\sigma}$ as follows:
\begin{itemize}
\item[$(i)$]
If $p$ is an atomic formula, $p^{\sigma}=\Pr_0(\sigma(p))$. Moreover, $\top^{\sigma}=\Pr_0(\top)$ and $\bot^{\sigma}=\Pr_0(\bot)$.
\item[$(ii)$]
$(B \circ C)^{\sigma}=B^{\sigma} \circ C^{\sigma}$ for all $\circ \in \{\wedge, \vee\}$.
\item[$(iii)$]
$(A \rightarrow_n B)^{\sigma}=\Pr_n(A^{\sigma} \rightarrow B^{\sigma})$.
\end{itemize}
Moreover, if $\Gamma$ is a sequence of formulas $A_i$, by $\Gamma^{\sigma}$ we mean the sequence of $A_i^{\sigma}$.
\end{dfn}
\begin{dfn}\label{t3-7}
Let $(M, \{T_n\}_{n=0}^{\infty})$ be a provability model and $A \in \mathcal{L}_{\infty}$ a formula. Then we say $(M, \{T_n\}_{n=0}^{\infty}) \vDash A$ if for any arithmetical substitution $\sigma$, $M \vDash A^{\sigma}$. Moreover, if $\Gamma$ and $\Delta$ are sequences of formulas and $\mathcal{C}$ a class of provability models, by $\mathcal{C} \vDash \Gamma \Rightarrow \Delta$, we mean that for any $(M, \{T_n\}_{n=0}^{\infty}) \in \mathcal{C}$, and for any arithmetical substitution $\sigma$, if $M \vDash \bigwedge \Gamma^{\sigma}$, then $M \vDash \bigvee \Delta^{\sigma}$.
\end{dfn}
Let us illuminate this definition by an example:
\begin{exam}\label{t3-8}
Consider the pair $(\mathbb{N}, \{T_n\}_{n=0}^{\infty})$ in which $T_0=\PA$ and $T_{n+1}=T_n + \Rfn(T_n)$. First of all, it is easy to check that this pair is a reflexive provability model. Secondly, we want to show that this model satisfies the statement $(A \wedge (A \rightarrow_n B)) \rightarrow_{n+1}B$. To show this fact, suppose that $\sigma$ is an arbitrary arithmetical substitution, then we have to show
\[
\mathbb{N} \vDash \Box_{n+1}(A^{\sigma} \wedge \Box_n(A^{\sigma} \rightarrow B^{\sigma}) \rightarrow B^{\sigma})
\]
which holds because $T_{n+1}$ has the reflection principle for the theory $T_n$. Specially, $T_{n+1} \vdash \Box_n(A^{\sigma} \rightarrow B^{\sigma}) \rightarrow (A^{\sigma} \rightarrow B^{\sigma})$. 
\end{exam}
In the remaining part of this section we will introduce the G\"{o}del translation $b: \mathcal{L}_{\infty} \to \mathcal{L}_{\infty}^{\Box}$ and we will show its soundness-completeness property.
\begin{dfn}\label{t3-9}
The translation $b: \mathcal{L}_{\infty} \to \mathcal{L}_{\infty}^{\Box}$ is defined as follows:
\begin{description}
\item[$(i)$]
$ p^{b}=\Box_0 p$, $ \top^{b}=\Box_0 \top$ and $\bot^{b}=\Box_0 \bot$.
\item[$(ii)$]
$(A\wedge B)^b= A^{b} \wedge B^{b}$
\item[$(iii)$]
$(A\vee B)^{b}=A^{b}\vee B^{b}$
\item[$(iv)$]
$(A\to_n B)^{b}=\Box_n(A^{b} \to B^{b})$
\end{description}
\end{dfn}
\begin{rem}\label{t3-10}
It is possible to have a similar translation $b': \mathcal{L} \to \mathcal{L}_{\Box} $ which is like the translation $b$ except in the implicational case which is defined as $(A \rightarrow B)^{b'}=\Box (A^{b'} \rightarrow B^{b'})$. Since it is possible to recognize from the context that which translation we are using, we will use $b$ for $b'$ as well.
\end{rem}
First we need the following important lemma which intuitively states that all translated formulas are boxed inherently:
\begin{lem}\label{t3-11}
$\mathbf{K4}_h \vdash A^b \rightarrow \Box_n A^b$.
\end{lem}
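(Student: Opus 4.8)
The plan is to prove the statement by induction on the structure of $A$, using freely that $\mathbf{K4}_h$ contains classical propositional logic, is closed under $\mathbf{MP}$ and $\mathbf{NC}_h$, and — combining $\mathbf{NC}_h$ with $\mathbf{K}_h$ — is monotone, in the sense that $\mathbf{K4}_h \vdash C \to D$ implies $\mathbf{K4}_h \vdash \Box_n C \to \Box_n D$ whenever the formulas involved are well-formed. Throughout, $n$ will denote any index for which $\Box_n A^b$ is a formula of $\mathcal{L}_\infty^\Box$, i.e.\ any $n$ strictly greater than every box-index occurring in $A^b$; note that such $n$ is then automatically admissible for every subformula of $A^b$ as well.

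For the atomic cases $A \in \{p, \top, \bot\}$ we have $A^b = \Box_0 A$, so it suffices to derive $\Box_0 A \to \Box_n \Box_0 A$ for $n \ge 1$: this follows from $\mathbf{4}_h$ (which gives $\Box_0 A \to \Box_1 \Box_0 A$) followed by iterated $\mathbf{H}$ (giving $\Box_1 \Box_0 A \to \Box_2 \Box_0 A \to \cdots \to \Box_n \Box_0 A$) and transitivity of implication. The implicational case is handled in the same way and, crucially, \emph{without} appeal to the induction hypothesis: if $A = B \to_m C$, then $A^b = \Box_m D$ with $D = B^b \to C^b$, every box-index occurring in $D$ is $< m$, and $\mathbf{4}_h$ together with iterated $\mathbf{H}$ yields $\Box_m D \to \Box_{m+1}\Box_m D \to \cdots \to \Box_n \Box_m D$, that is, $A^b \to \Box_n A^b$.

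The genuine inductive work is in the cases $A = B \wedge C$ and $A = B \vee C$, where $A^b = B^b \wedge C^b$ and $A^b = B^b \vee C^b$ respectively. Here the induction hypothesis gives $\mathbf{K4}_h \vdash B^b \to \Box_n B^b$ and $\mathbf{K4}_h \vdash C^b \to \Box_n C^b$ ($n$ being admissible for $B^b$ and $C^b$), and it remains to establish $\mathbf{K4}_h \vdash \Box_n B^b \wedge \Box_n C^b \to \Box_n(B^b \wedge C^b)$, together with $\mathbf{K4}_h \vdash \Box_n B^b \to \Box_n(B^b \vee C^b)$ and $\mathbf{K4}_h \vdash \Box_n C^b \to \Box_n(B^b \vee C^b)$. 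These are the standard facts that $\Box_n$ distributes over $\wedge$, and over $\vee$ in the easy direction: necessitate the classical tautologies $B^b \to (C^b \to B^b \wedge C^b)$, $B^b \to B^b \vee C^b$, and $C^b \to B^b \vee C^b$ via $\mathbf{NC}_h$, strip off the outer boxes using $\mathbf{K}_h$, and recombine by propositional reasoning; the $\wedge$-case is then closed by conjoining the two halves of the induction hypothesis, and the $\vee$-case by a case split on $B^b \vee C^b$.

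I do not expect a serious obstacle here; the only points needing attention are bookkeeping ones — checking at each step that the formulas written down (such as $\Box_k \Box_m D$ for $m < k \le n$, or $\Box_n(B^b \wedge C^b)$) really do satisfy the increasing-index well-formedness condition of Definition \ref{t2-1}. This is guaranteed by the choice of $n$ together with the elementary observation that, for $A \in \mathcal{L}_\infty$, all box-indices occurring in $A^b$ are bounded by the implication-indices of $A$ (atomic boxes having index $0$). In particular, the induction hypothesis is invoked only in the $\wedge$- and $\vee$-cases, precisely those in which the principal connective of $A^b$ is not already a box.
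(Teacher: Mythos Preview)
Your proof is correct and follows the same inductive structure as the paper's own argument: atomic and implicational formulas are handled directly via $\mathbf{4}_h$ (together with $\mathbf{H}$), while conjunction and disjunction use the induction hypothesis combined with the standard distribution of $\Box_n$ over $\wedge$ and (one direction of) $\vee$. Your version is in fact more explicit than the paper's in two respects: you spell out that $\mathbf{4}_h$ only yields $\Box_m D \to \Box_{m+1}\Box_m D$ and that iterated $\mathbf{H}$ is needed to reach an arbitrary admissible $n$, and you track the well-formedness constraints of $\mathcal{L}_\infty^\Box$ throughout; the paper simply writes ``by axiom $\mathbf{4}_h$'' for the combined step.
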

\begin{proof}
The proof is by induction on the complexity of $A$. If $A$ is an atom, $\top$ or $\bot$, we have $A^b=\Box_0(A)$. Therefore, by axiom $\mathbf{4}_h$, we have $\mathbf{K4}_h \vdash \Box_0 A \rightarrow \Box_n \Box_0 A$. For conjunction, by IH we have $\mathbf{K4}_h \vdash B^b \rightarrow \Box_n B^b$ and $\mathbf{K4}_h \vdash C^b \rightarrow \Box_n C^b$, therefore, we have $\mathbf{K4}_h \vdash B^b \wedge C^b \rightarrow \Box_n (B^b \wedge C^b)$. For disjunction, by IH, we have $\mathbf{K4}_h \vdash B^b \rightarrow \Box_n B^b$ and $\mathbf{K4}_h \vdash C^b \rightarrow \Box_n C^b$. Therefore, $\mathbf{K4}_h \vdash B^b \vee C^b \rightarrow \Box_n B^b \vee \Box_n C^b$ and since $\mathbf{K4}_h \vdash \Box_n B^b \vee \Box_n C^b \rightarrow \Box_n (B^b \vee C^b)$, we have the claim. For the implication $B \rightarrow_m C$, again by $\mathbf{4}_h$, we have $\mathbf{K4}_h \vdash \Box_m(B^b \rightarrow C^b) \rightarrow \Box_n \Box_m (B^b \rightarrow C^b)$.
\end{proof}
We need the following theorems about the systems $\mathbf{FPL}$, $\mathbf{GL}$, $\mathbf{FPL}_h$ and $\mathbf{GL}_h$. The strategy is reducing the soundness-completeness of the translation $b$ between $\mathbf{FPL}_h$ and $\mathbf{GL}_h$ to the soundness-completeness between $\mathbf{FPL}$ and $\mathbf{GL}$.
\begin{thm}\label{t3-12}\cite{Vi}
$\Gamma \vdash_{\mathbf{FPL}} A$ iff $\Gamma^b \vdash_{\mathbf{GL}} A^b$.
\end{thm}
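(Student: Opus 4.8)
The statement is Visser's; the plan is to reprove it by the standard G\"odel--McKinsey--Tarski strategy adapted to Basic propositional logic, splitting into the \emph{soundness} of $b$ (the ``only if'') and the \emph{completeness} of $b$ (the ``if''). Since both $\Gamma\vdash_{\mathbf{FPL}}A$ and $\Gamma^b\vdash_{\mathbf{GL}}A^b$ are defined through finite conjunctions of premises, it suffices to prove $\mathbf{FPL}\vdash A\to B$ iff $\mathbf{GL}\vdash A^b\to B^b$ and then fold the premises back in.

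\textbf{Soundness of $b$.} First I record the untyped analogue of Lemma~\ref{t3-11}, namely $\mathbf{K4}\vdash A^b\to\Box A^b$ (same induction on $A$), so a fortiori $\mathbf{GL}\vdash A^b\to\Box A^b$; call a formula of the shape $A^b$ \emph{boxed}. I also note the sublemma $\mathbf{GL}\vdash\Box\bot\to A^b$, again by induction on $A$ (i.e.\ $\Box\bot$ provably entails every boxed formula). Now I induct on the $\mathbf{FPL}$-derivation. The rules $\wedge I,\wedge E,\vee I,\vee E$ translate immediately, the $\bot$-rule via the sublemma, and the formalized and structural rules ($(\wedge I)_f$, $(\vee E)_f$, $tr_f$, $tr$) translate to normality of $\Box$ and composition of $\mathbf{GL}$-derivations. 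For $\rightarrow I$: from $\Gamma^b,A^b\vdash_{\mathbf{GL}}B^b$ one gets $\vdash_{\mathbf{GL}}\bigwedge\Gamma^b\to(A^b\to B^b)$, necessitates, distributes $\Box$, and uses the boxedness of the members of $\Gamma^b$ to discharge $\Box\bigwedge\Gamma^b$, obtaining $\Gamma^b\vdash_{\mathbf{GL}}\Box(A^b\to B^b)=(A\rightarrow_n B)^b$ (the index side-condition plays no role). For the L\"ob rule $L$: since $(\top\to A)^b=\Box(\Box\top\to A^b)$ is $\mathbf{GL}$-equivalent to $\Box A^b$, the rule becomes modus ponens against the L\"ob axiom $\Box(\Box A^b\to A^b)\to\Box A^b$, and its general form reduces to the same instance after the propositional rewriting $(A^b\wedge\Box X)\to B^b\equiv\Box X\to X$ with $X=A^b\to B^b$.

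\textbf{Completeness of $b$.} I argue by contraposition using Kripke semantics. Recall (see \cite{Vi} and the provability-logic literature) that $\mathbf{FPL}$ is sound and complete for finite transitive conversely well-founded frames with persistent valuations under the Basic-logic clauses --- in particular $w\Vdash C\to D$ quantifies over the \emph{strict} $R$-successors of $w$ --- while $\mathbf{GL}$ is sound and complete for transitive conversely well-founded frames. Assume $\mathbf{FPL}\not\vdash A\to B$ and fix a finite Basic-logic model $\mathcal M=(W,R,V)$ and $w_0\in W$ with $w_0\Vdash A$ and $w_0\not\Vdash B$. I build a $\mathbf{GL}$-model $\mathcal M'$ by adjoining to each $w\in W$ a fresh terminal ``shadow'' $w^{*}$, keeping $R$ on $W$, putting $wR'v^{*}$ iff $w=v$ or $wRv$, and setting $V'(p)=V(p)\cup\{v^{*}:v\in V(p)\}$. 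Then $\mathcal M'$ is transitive and conversely well-founded, each $w^{*}$ is a leaf (and each $w\in W$ has the successor $w^{*}$), and every leaf forces every boxed formula. The Translation Lemma, $w\Vdash_{\mathcal M}C\iff w\Vdash_{\mathcal M'}C^b$ for $w\in W$, now follows by induction on $C$: the atomic case is exactly why the ``$w=v$'' shadow edge was included ($w\Vdash_{\mathcal M'}\Box p$ forces $w^{*}\in V'(p)$, i.e.\ $w\in V(p)$, the converse using persistence of $V$); $\wedge$ and $\vee$ are immediate from the induction hypothesis; and for implication the shadow successors of $w$ contribute nothing (they force all boxed formulas) while the $W$-successors of $w$ reproduce, via the induction hypothesis, precisely the Basic-logic clause for $C\to D$. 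Evaluating at $w_0$ gives a $\mathbf{GL}$-model refuting $A^b\to B^b$, hence $\mathbf{GL}\not\vdash A^b\to B^b$.

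\textbf{Main obstacle.} The delicate step is this last frame transformation: $\Box p$ at $w$ is a condition on the \emph{strict} successors of $w$, whereas Basic-logic forcing of $p$ is local at $w$, and the shadow construction is precisely what reconciles the two; one must verify both that $\mathcal M'$ is still a legitimate $\mathbf{GL}$-frame and that adjoining shadows leaves the implication clause of the Translation Lemma intact (and handles $\bot^b=\Box\bot$ correctly, which is why every $w\in W$ is given the successor $w^{*}$). Everything else --- the boxing lemma, the rule-by-rule soundness check, the collapse of the L\"ob rule to L\"ob's axiom --- is routine, and the finite model property and Kripke completeness of $\mathbf{FPL}$ and $\mathbf{GL}$ may simply be quoted.
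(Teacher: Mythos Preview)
The paper does not prove Theorem~\ref{t3-12}; it is quoted from Visser~\cite{Vi} as a known result and then used (via Theorem~\ref{t3-15}) to handle the $\mathbf{FPL}_h/\mathbf{GL}_h$ case of Theorem~\ref{t3-16}. So there is no in-paper argument to compare against, and your proposal is in effect a reconstruction of Visser's theorem.

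Your two main ingredients are correct. The soundness direction by induction on $\mathbf{FPL}$-derivations goes through as you say: the boxing lemma, the $\Box\bot$-sublemma, and the collapse of the L\"ob rule to L\"ob's axiom via the rewriting $(A^{b}\wedge\Box X)\to B^{b}\;\equiv\;\Box X\to X$ with $X=A^{b}\to B^{b}$ are all fine. The shadow-frame construction for completeness is also correct: adjoining a terminal $w^{*}$ with $wR'w^{*}$ (and $wR'v^{*}$ whenever $wRv$) yields a transitive conversely well-founded frame, every leaf forces every formula of the shape $C^{b}$, and the Translation Lemma then goes through by the induction you sketch (persistence being exactly what makes the atomic case work, and the $w^{*}$-edge being exactly what makes $\bot^{b}=\Box\bot$ fail at every $w\in W$).

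One step needs more care. You assert that ``$\Gamma\vdash_{\mathbf{FPL}}A$ \ldots\ [is] defined through finite conjunctions of premises'' and thereby reduce to the premise-free case. In this paper (Remark~\ref{t3-4}) $\mathbf{FPL}$ is the untyped natural-deduction system with no $\to E$ rule; consequence from open assumptions is \emph{not} defined as provability of $\bigwedge\Gamma\to A$, and the passage from $\mathbf{FPL}\vdash\bigwedge\Gamma\to A$ to $\Gamma\vdash_{\mathbf{FPL}}A$ is not available by a direct appeal to the rules (there is no modus ponens to discharge the implication). The reduction you want is nonetheless valid, but it requires an argument. The cleanest fix is to run your completeness proof directly for the consequence relation: assume $\Gamma\not\vdash_{\mathbf{FPL}}A$, quote \emph{strong} Kripke completeness of $\mathbf{FPL}$ to obtain $w_{0}$ forcing every $\gamma\in\Gamma$ and refuting $A$, and then your shadow construction and Translation Lemma give a $\mathbf{GL}$-countermodel to $\bigwedge\Gamma^{b}\to A^{b}$. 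Alternatively, keep the premise-free version and recover the missing half of the deduction theorem by the ``add a fresh root'' trick on $\mathbf{FPL}$-models (if $w_{0}\Vdash\Gamma$ and $w_{0}\not\Vdash A$, a new root $w_{-1}$ with $w_{-1}Rw_{0}$ refutes $\bigwedge\Gamma\to A$). Either way, make explicit that you are using strong completeness of $\mathbf{FPL}$, not merely completeness for theorems.
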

\begin{dfn}\label{t3-13}
Let $A$ be a usual propositional formula. By a witness $w$ for $A$ we mean an assignment which assigns natural numbers to implications in a way that if $n$ is assigned to the implication in $A \rightarrow B$ then $n$ should be strictly greater than all numbers assigned to the implications in $A$ and $B$. Moreover, by $A(w)$ we mean a formula in $\mathcal{L}_{\infty}$ substituting any occurrence of implication with $\rightarrow_n$ when $n$ is a number which $w$ assigns to that occurrence. Moreover, by the forgetful translation $f: \mathcal{L}_{\infty} \to \mathcal{L}$ we mean a function which translates atomic formulas, conjunctions and disjunctions to themselves and sends $\rightarrow_n$ to $\rightarrow$. Notice that there exists a witness $w$ for $A^f$ such that $A^f(w)=A$.
\end{dfn}
\begin{lem}\label{t3-14}
\begin{itemize}
\item[$(i)$]
For any $A \in \mathcal{L}_{\infty}$ and any natural numbers $m, n$ greater than all the indices in $A$ and $B$, $ A \rightarrow_m B \vdash_{\mathbf{FPL}_h} A \rightarrow_n B$.
\item[$(ii)$]
For any $A \in \mathcal{L}$ and any witnesses $u$ and $v$ for $A$, $ A(u) \vdash_{\mathbf{FPL}_h} A(v)$
\end{itemize}
\end{lem}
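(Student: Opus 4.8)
The plan is to prove part $(i)$ first and then derive part $(ii)$ from it by a straightforward induction on the structure of $A$. For part $(i)$, the key observation is that the rule $L$ of $\mathbf{FPL}_h$ lets us conclude $A\to_n B$ from $(A\wedge(A\to_n B))\to_{n+1}B$, and its natural iteration should let us pass from an implication of one index to an implication of any other admissible index. More precisely, I would first handle the case $m<n$: starting from the hypothesis $A\to_m B$, I would like to manufacture the premise $(A\wedge(A\to_n B))\to_{n+1}B$ of the rule $L$ and conclude $A\to_n B$. Here the structural rule $H$ is crucial, since it upgrades $A\to_m B$ to $A\to_{n-1}B$ (legitimate because $n-1\ge m$ and $n-1$ is still strictly above all indices in $A,B$), and more importantly because inside the deduction establishing $(A\wedge(A\to_n B))\to_{n+1}B$ we are allowed to discharge an assumption $A\wedge(A\to_n B)$ — then $\wedge E$ gives $A$ and $A\to_n B$, and the rule $R$ is \emph{not} available in $\mathbf{FPL}_h$, so instead I must use the formalized transitivity-like manipulations. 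Concretely: from the assumption $A\wedge(A\to_n B)$ we get $A$ and $A\to_n B$; we also have the global hypothesis $A\to_m B$, and by $\rightarrow I$ (with index $n+1$, valid since $n+1$ exceeds every index in $A$, $B$, and in the formulas $A\to_n B$ appearing in the discharged assumption) we obtain $(A\wedge(A\to_n B))\to_{n+1}B$ provided we can derive $B$ from $A$, $A\to_n B$ and $A\to_m B$ in the object language — but deriving $B$ is exactly what we cannot do without $R$. So the correct route is different: apply $\rightarrow I_{n+1}$ to the trivial deduction of $B$ from the \emph{assumption} $A\wedge(A\to_n B)$ via — no: rather, use that $A\to_m B$ together with $A$ (both available) yields, by $H$ and the formalized rules, the formula $(A\wedge(A\to_n B))\to_{n+1} B$ directly; then $L$ gives $A\to_n B$. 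The cleanest version: from $A\to_m B$, use $H$ to get $A\to_{n+1}B$; use $\wedge E$-style formalized reasoning ($(A\wedge C)\to_{n+1}A$ is provable in $\mathbf{BPC}_h$, then $tr_f$) to get $(A\wedge(A\to_nB))\to_{n+1}B$; finally apply $L$ to obtain $A\to_n B$. For the case $m>n$ one direction is just the rule $H$, so nothing is needed; hence $(i)$ holds for all admissible $m,n$.

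For part $(ii)$, I would induct on the complexity of the usual formula $A$. The atomic, $\top$, $\bot$ cases are trivial since $A(u)=A(v)$. For $A=B\wedge C$ or $A=B\vee C$, a witness for $A$ restricts to witnesses for $B$ and $C$; by IH we have $B(u)\vdash_{\mathbf{FPL}_h}B(v)$ and likewise for $C$, and using the $\wedge I/\wedge E$ (resp.\ $\vee I/\vee E$) rules these combine to $A(u)\vdash_{\mathbf{FPL}_h}A(v)$. The interesting case is $A=B\to C$: then $A(u)=B(u_0)\to_{m}C(u_1)$ and $A(v)=B(v_0)\to_{n}C(v_1)$ where $u_0,u_1$ (resp.\ $v_0,v_1$) are the induced witnesses on $B$ and $C$ and $m,n$ are the top indices. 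From the hypothesis $B(u_0)\to_m C(u_1)$, the IH gives $B(v_0)\vdash_{\mathbf{FPL}_h}B(u_0)$ and $C(u_1)\vdash_{\mathbf{FPL}_h}C(v_1)$; via the formalized transitivity rule $tr_f$ (using that these sub-derivations can be promoted by $H$ to the correct index) we get $B(v_0)\to_m C(v_1)$; then part $(i)$ converts the index $m$ to the index $n$, yielding $B(v_0)\to_n C(v_1)=A(v)$. The point where one must be careful is that $tr_f$ requires the outer index of all three implications to agree, so one should first compose at the larger of $m$ and the indices needed for the sub-conversions and then apply $(i)$ at the end — or, noting the sub-derivations $B(v_0)\vdash B(u_0)$ are themselves deductions (not implications) obtained from the IH, use the structural rule $tr$ together with a single $\rightarrow I$ at an index large enough to dominate everything, and finally invoke $(i)$.

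The main obstacle, and the step that deserves the most care, is the indexing bookkeeping in the implicational case of $(ii)$ and in the construction inside $(i)$: every application of $\rightarrow I$ must have its index strictly above all indices occurring in the (undischarged part of the) deduction, including in the discharged assumptions, and every application of $H$ must go upward. One must check that when we build $(A\wedge(A\to_n B))\to_{n+1}B$ the index $n+1$ genuinely dominates all indices in $A$, $B$, and in $A\to_n B$ — which it does, precisely because $n$ was already chosen strictly above all indices of $A$ and $B$ — and that the sub-derivations supplied by the induction hypothesis in $(ii)$ do not introduce indices that would block a later $\rightarrow I$. I do not expect any genuinely deep difficulty here, only a somewhat delicate verification that the Russellian side conditions are met at each step; the availability of $H$ (monotonicity of implications) is what makes all of this go through.
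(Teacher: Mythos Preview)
Your overall strategy matches the paper's: for $(i)$ use rule $L$ to lower the implication index and rule $H$ to raise it; for $(ii)$ induct on $A$, and in the implication case promote the IH-derivations to implications at a sufficiently large common index, compose with $tr_f$, and then invoke $(i)$ to adjust the outer index. That is exactly what the paper does (it takes $k=\max\{m,n\}$ as the working index in $(ii)$).

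However, you have the two cases in $(i)$ swapped. The rule $H$ takes $A\to_n B$ to $A\to_m B$ under the side condition $m\ge n$, i.e.\ it \emph{raises} the index. Hence going from $A\to_m B$ to $A\to_n B$ is immediate by $H$ precisely when $m\le n$, not when $m>n$ as you write. The case that genuinely requires the $L$-trick is $m>n$ (lowering the index), and your ``cleanest version'' argument---from $A\to_m B$ obtain $A\to_{n+1}B$, then $(A\wedge(A\to_n B))\to_{n+1}B$ via $(A\wedge C)\to_{n+1}A$ and $tr_f$, then apply $L$---only works as stated when $m\le n+1$, so in the genuinely hard direction it handles exactly the one-step decrease $m=n+1$. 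The paper makes this reduction explicit: it suffices to show $A\to_{n+1}B\vdash_{\mathbf{FPL}_h}A\to_n B$, and then one iterates from $m$ down to $n$. Once you correct the case labels and add the (trivial) iteration, your argument for $(i)$ coincides with the paper's, and your treatment of $(ii)$ is already essentially identical to it.
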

\begin{proof}
For $(i)$ it is enough to show that if $n$ is greater than all indices in $A$ and $B$,
$
A \rightarrow_{n+1} B \vdash_{\mathbf{FPL}_h} A \rightarrow_n B
$. We have $A \rightarrow_{n+1} B \vdash_{\mathbf{FPL}_h} (A \wedge (A \rightarrow_n B) \rightarrow_{n+1} B) $. By the rule $L$
\[
(A \wedge (A \rightarrow_n B) \rightarrow_{n+1} B) \vdash_{\mathbf{FPL}_h} A \rightarrow_n B
\]
hence $A \rightarrow_{n+1} B \vdash_{\mathbf{FPL}_h} A \rightarrow_n B$.\\
For $(ii)$. Use induction on $A$. The atomic case and the case for conjunction and disjunction are easy to check. For the implicational case assume $A= B \rightarrow C$. Then we know that $u=(u', n, u'')$ and $v=(v', m, v'')$ such that $n$ is bigger than all numbers in $u'$ and $u''$ and also $m$ is bigger than all numbers in $v'$ and $v''$. Pick $k=max\{m, n\}$. By IH, $B(v') \vdash_{\mathbf{FPL}_h} B(u')$ and $C(u'') \vdash_{\mathbf{FPL}_h} C(v'')$ Therefore, $\mathbf{FPL}_h \vdash B(v') \rightarrow_k B(u')$ and $\mathbf{FPL}_h \vdash C(u'') \rightarrow_k C(v'')$. By $tr_f$ we have
$
B(u') \rightarrow_k C(u'') \vdash_{\mathbf{FPL}_h} B(v') \rightarrow_k C(v'')
$.
On the other hand by $(i)$ we have
$
B(u') \rightarrow_n C(u'') \vdash_{\mathbf{FPL}_h} B(u') \rightarrow_k C(u'')
$
and
$
B(v') \rightarrow_k C(v'') \vdash_{\mathbf{FPL}_h} B(v') \rightarrow_m C(v'')
$
hence
$
B(u') \rightarrow_n C(u'') \vdash_{\mathbf{FPL}_h} B(v') \rightarrow_m C(v'')
$.
\end{proof}
\begin{thm}\label{t3-15}
\begin{itemize}
\item[$(i)$]
If $\Gamma \vdash_{\mathbf{FPL}_h} A$ then $\Gamma^f \vdash_{\mathbf{FPL}} A^f$.
\item[$(ii)$]
Let $\Gamma \cup \{A\} \subseteq \mathcal{L}$ be a set of usual propositional formulas and $w$ and $w'$ are witnesses for $\Gamma$ and $A$ respectively, then if $\Gamma \vdash_{\mathbf{FPL}} A$ then $\Gamma(w) \vdash_{\mathbf{FPL}_h} A(w')$.
\end{itemize}
\end{thm}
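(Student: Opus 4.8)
The plan is to prove both directions by induction on the derivation, using the structural rules $H$ and $tr$ of $\mathbf{FPL}_h$ (and Lemma \ref{t3-14}) to repair the index bookkeeping that the forgetful translation $f$ destroys. For part $(i)$, I would proceed by induction on the height of the $\mathbf{FPL}_h$-derivation of $\Gamma \vdash A$. Every propositional rule, every formalized rule and the rule $tr$ of $\mathbf{FPL}_h$ maps — after applying $f$ to all formulas — to the corresponding rule of $\mathbf{FPL}$, since $f$ simply erases subscripts and $\mathbf{FPL}$ has exactly the unsubscripted versions of these rules (recall $\mathbf{FPL}$ is $\mathbf{BPC}$ plus the rule $L'$ of Remark \ref{t3-4}, which is the $f$-image of the rule $L$). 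The only two rules needing attention are $H$ and $\rightarrow I$. For $H$: its $f$-image is the identity, so an application of $H$ just disappears under $f$. For $\rightarrow I$ with subscript $n$: its $f$-image is exactly the $\mathbf{BPC}$-rule $\rightarrow I$, and the side condition of $\mathbf{FPL}_h$'s $\rightarrow I$ is more restrictive than (hence implies) anything $\mathbf{FPL}$ requires, so the translated step is legal. Thus $\Gamma^f \vdash_{\mathbf{FPL}} A^f$.

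For part $(ii)$, the direction that carries the real content, I would induct on the $\mathbf{FPL}$-derivation of $\Gamma \vdash A$, and at each step produce an $\mathbf{FPL}_h$-derivation of some $\Gamma(w) \vdash A(w')$; then Lemma \ref{t3-14}$(ii)$ lets me convert between any two witness choices, so it suffices to build \emph{one} convenient witnessing. The axioms and the conjunction/disjunction rules are witnessed directly. For the transitivity and the formalized rules, the induction hypotheses give $\mathbf{FPL}_h$-proofs with possibly mismatched subscripts on the shared formula; I would raise all the relevant subscripts to a common maximum using the rule $H$ (and, where the rule is stated only for a fixed index, Lemma \ref{t3-14}$(i)$ to come back down), exactly as in the proof of Lemma \ref{t3-14}$(ii)$. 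The rule $L'$ of $\mathbf{FPL}$ is simulated by the rule $L$ of $\mathbf{FPL}_h$, again after an $H$-adjustment so the hypothesis has the shape $(A(u)\wedge(A(u)\rightarrow_n B(u')))\rightarrow_{n+1} B(u')$ with $n+1$ strictly above everything.

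The main obstacle is the $\rightarrow I$ step. In $\mathbf{FPL}$ one discharges a hypothesis $A$ to get $A \rightarrow B$ with no constraint; in $\mathbf{FPL}_h$ the introduced implication must carry an index strictly larger than every index occurring anywhere in the open hypotheses (including $A$ itself). By the induction hypothesis I have an $\mathbf{FPL}_h$-proof of $\Delta(w), A(u) \vdash_{\mathbf{FPL}_h} B(u'')$ for some witnesses, but the index $n$ that the target witness $w'$ wants to put on $\rightarrow$ need not be large enough. The fix is: first choose $N$ strictly larger than all indices appearing in $\Delta(w)$, $A(u)$ and $B(u'')$; since $B \rightarrow C$ being a legal formula of $\mathcal L$ already forces, via the chosen overall witness, the index on this implication to dominate those in $B$ and $C$, I can take the outer witness value to be such an $N$, apply $\rightarrow I$ with index $N$ to obtain $\Delta(w) \vdash_{\mathbf{FPL}_h} A(u) \rightarrow_N B(u'')$, and finally invoke Lemma \ref{t3-14}$(ii)$ to slide from this witnessing of $A \rightarrow B$ over to the prescribed $w'$. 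A little care is needed to keep the witnesses on the context $\Delta$ fixed throughout the induction (so that the $\vee E$ and $tr$ rules, which merge contexts, are applied to literally the same hypothesis formulas); this is handled by fixing at the outset a single witness on $\Gamma$ and threading it through, adjusting only the witnesses internal to each subderivation via Lemma \ref{t3-14}.
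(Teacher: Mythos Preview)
Your proposal is correct and follows essentially the same approach as the paper: induction on the derivation, with the $\rightarrow I$ case handled by first introducing the implication with a sufficiently large fresh index and then invoking Lemma~\ref{t3-14} to shift to the prescribed witness. The paper singles out the axiom case, $\rightarrow I$, and $tr_f$ as the nontrivial cases and treats them exactly as you describe; your extra remarks on the $L$ rule and on threading a fixed witness through the context are sound elaborations that the paper leaves implicit.
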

\begin{proof}
$(i)$ is clear by the Remark \ref{t3-4}. For $(ii)$, use induction on the length of the proof of $A$. The important cases are the axiom case, the case of $\rightarrow I$ and the case of $tr_f$.\\

1. For the axiom case, we have $A \in \Gamma$. Assume that the witness for $A$ in $\Gamma$ is $u$, then by Theorem \ref{t3-14}, $ A(u) \vdash_{\mathbf{FPL}_h} A(w')$ which completes the proof.\\

2. For the $\rightarrow I$ case, if $w=(u, n, v)$ then by IH, we know that $\Gamma(w), B(u) \vdash_{\mathbf{FPL}_h} C(v)$. Pick $k$ bigger than all numbers in $w$, $u$ and $v$. Then by $\rightarrow I$ in $\mathbf{FPL}_h$ we have
$\Gamma(w) \vdash_{\mathbf{FPL}_h} B(u) \rightarrow_k C(v)$. Now by Theorem \ref{t3-14}, we will have $\Gamma(w) \vdash_{\mathbf{FPL}_h} B(u) \rightarrow_n C(v)$.\\

3. For the $tr$ case, assume that $w'=(u, n, v)$. Pick $y$ as a witness for $C$ and $k$ bigger than all numbers in $w$, $w'$ and $y$. By IH, we have $\Gamma(w) \vdash_{\mathbf{FPL}_h} B(u) \rightarrow_k C(y)$ and $\Gamma(w) \vdash_{\mathbf{FPL}} C(y) \rightarrow_k D(v)$, then by $tr_f$ in $\mathbf{FPL}_h$ we have $\Gamma(w) \vdash_{\mathbf{FPL}_h} B(u) \rightarrow_k D(v)$. Finally by using the Theorem \ref{t3-14} part $(i)$, $\Gamma(w) \vdash_{\mathbf{FPL}_h} B(u) \rightarrow_n D(v)$. 
\end{proof}
It is time to prove the soundness-completeness of the translation $b$.
\begin{thm}\label{t3-16}(Soundness-completeness of $b$)
\begin{itemize}
\item[$(i)$]
$\Gamma \vdash_{\mathbf{BPC}_h} A$ iff $\Gamma^b \vdash_{\mathbf{K4}_h} A^b$.
\item[$(ii)$]
$\Gamma \vdash_{\mathbf{EBPC}_h} A$ iff $\Gamma^b \vdash_{\mathbf{KD4}_h} A^b$.
\item[$(iii)$]
$\Gamma \vdash_{\mathbf{IPC}_h} A$ iff $\Gamma^b \vdash_{\mathbf{S4}_h} A^b$.
\item[$(iv)$]
$\Gamma \vdash_{\mathbf{FPL}_h} A$ iff $\Gamma^b \vdash_{\mathbf{GL}_h} A^b$.
\end{itemize}
\end{thm}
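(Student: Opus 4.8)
The plan is to prove each equivalence by establishing soundness (``$\Rightarrow$'') directly, by induction on derivations, and faithfulness (``$\Leftarrow$'') by reducing it to the corresponding fact for the untyped logics. For soundness, suppose $\Gamma \vdash_{\mathbf{BPC}_h} A$ and induct on the natural-deduction derivation to build a $\mathbf{K4}_h$-derivation of $A^b$ from $\Gamma^b$. The boolean rules and the formalized rules $(\wedge I)_f, (\vee E)_f, tr_f$ all reduce to necessitating the relevant classical tautology and applying $\mathbf{K}_h$; $tr$ becomes cut; $H$ becomes iterated $\mathbf{H}$; and the rule $\bot$ uses the auxiliary fact $\mathbf{K4}_h \vdash \bot^b \to A^b$, itself a short induction on $A$ via $\mathbf{K}_h$ and $\mathbf{H}$. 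The one case that genuinely needs Lemma \ref{t3-11} is $\to I$: from $\Gamma^b, A^b \vdash_{\mathbf{K4}_h} B^b$ one gets $\mathbf{K4}_h \vdash \bigwedge \Gamma_0^b \to (A^b \to B^b)$ for a finite $\Gamma_0 \subseteq \Gamma$, and then $\Box_n$-necessitation, $\mathbf{K}_h$, and Lemma \ref{t3-11} (used to move the antecedents under $\Box_n$) yield $\Gamma^b \vdash_{\mathbf{K4}_h} \Box_n(A^b \to B^b) = (A \to_n B)^b$; the box-index $n$ is admissible precisely because $A \to_n B \in \mathcal{L}_\infty$ forces $n$ above every index occurring in $A$ and $B$.

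For the three stronger systems only the extra rule has to be re-examined. Rule $R$ translates by Lemma \ref{t3-11}, then $\mathbf{K}_h$, then $\mathbf{T}_h$, so it is valid in $\mathbf{S4}_h$. Rule $C$ translates by Lemma \ref{t3-11} and $\mathbf{K}_h$ (producing $\Box_n \Box_0 \bot$) followed by $\mathbf{D}_h$ at index $0$ (necessitated up to $n$) together with $\mathbf{D}_h$ at index $n$, so it is valid in $\mathbf{KD4}_h$. For rule $L$, the key observation is that $\bigl(A^b \wedge \Box_n(A^b \to B^b)\bigr) \to B^b$ is classically equivalent to $\Box_n C \to C$ with $C := A^b \to B^b$; hence $\mathbf{L}_h$ applies to the translated premise and delivers $\Box_n C = (A \to_n B)^b$, showing $L$ valid in $\mathbf{GL}_h$.

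For faithfulness I would introduce the index-erasing forgetful maps $f : \mathcal{L}_\infty \to \mathcal{L}$ (of Definition \ref{t3-13}) and $g : \mathcal{L}_\infty^\Box \to \mathcal{L}^\Box$, record the commutation $g \circ b = b \circ f$ (immediate induction), and note that $g$ turns every axiom and rule of $\mathbf{K4}_h, \mathbf{KD4}_h, \mathbf{S4}_h, \mathbf{GL}_h$ into a theorem or rule of $\mathbf{K4}, \mathbf{KD4}, \mathbf{S4}, \mathbf{GL}$ respectively, the index side-conditions simply evaporating. Then, from $\Gamma^b \vdash_{L'_h} A^b$ (writing $L'_h$ for whichever hierarchical modal logic is in play, $L'$ for its untyped companion, and $L_{h,\mathrm{prop}}$, $L_{\mathrm{prop}}$ for the corresponding propositional logics), pick a finite $\Delta \subseteq \Gamma$ with $L'_h \vdash \bigwedge \Delta^b \to A^b$, apply $g$ to obtain $(\Delta^f)^b \vdash_{L'} (A^f)^b$, invoke the known soundness--completeness of the G\"{o}del translation for the untyped pairs (the classical G\"{o}del translation theorem for $\mathbf{IPC}/\mathbf{S4}$; Visser's analogues for $\mathbf{BPC}/\mathbf{K4}$ and $\mathbf{EBPC}/\mathbf{KD4}$, cf.\ \cite{Vi}; and Theorem \ref{t3-12} for $\mathbf{FPL}/\mathbf{GL}$) to get $\Gamma^f \vdash_{L_{\mathrm{prop}}} A^f$, and finally apply Theorem \ref{t3-15}$(ii)$ with witnesses chosen so that $\Gamma^f$ and $A^f$ redecorate back into the original $\Gamma$ and $A$ (available by the last sentence of Definition \ref{t3-13}), concluding $\Gamma \vdash_{L_{h,\mathrm{prop}}} A$. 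This uses only the trivial forgetful half of the hierarchical-to-untyped passage, so no lifting of modal derivations is required; it does need Theorem \ref{t3-15}$(ii)$ for all four propositional pairs, but the proof is exactly the stated one, the extra rules $C, R, D, L$ being handled by the witness-alignment Lemma \ref{t3-14} just as $tr_f$ is.

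I expect the $\to I$ case of soundness to be the main obstacle, for the reason flagged in Remark \ref{t3-5}: one must line up the index restriction on the discharged hypothesis with the index of the box the translation creates and with the instance of Lemma \ref{t3-11} that pushes the undischarged assumptions and the discharged formula under $\Box_n$ — and this is precisely why Lemma \ref{t3-11} was isolated. A self-contained alternative for parts $(i)$--$(iii)$, avoiding the classical untyped input, is to run faithfulness through the cut-free calculi $G(\mathbf{K4}_h), G(\mathbf{KD4}_h), G(\mathbf{S4}_h)$ of Theorem \ref{t2-6}: since a translated formula is provably equivalent to its own $\Box_n$ (Lemma \ref{t3-11}, together with $\mathbf{T}_h$ in the $\mathbf{S4}_h$ case), a cut-free derivation of $\Delta^b \Rightarrow A^b$ can be read off rule by rule as a derivation of $\Delta \vdash A$ in the matching propositional system, the only delicate point being the bottom-up reading of the modal rules $\Box_{4_h}R, \Box_{D_h}R, \Box_{S_h}R, \Box_h L$ as applications of $\to I$ and $R$.
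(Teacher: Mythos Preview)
Your soundness direction matches the paper's and is correct, modulo one slip: in the $\to I$ case what makes $\Box_n$-necessitation on $\bigwedge\Gamma^b\to(A^b\to B^b)$ legal is the rule's side-condition that $n$ dominates all indices in the \emph{open hypotheses} $\Gamma$, not merely the well-formedness of $A\to_n B$.

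The genuine gap is in your faithfulness argument for $(i)$--$(iii)$. You claim that Theorem~\ref{t3-15}$(ii)$ and Lemma~\ref{t3-14} carry over to $\mathbf{BPC}_h$, $\mathbf{EBPC}_h$, $\mathbf{IPC}_h$ with ``exactly the stated'' proof. They do not. Lemma~\ref{t3-14}$(i)$ derives $A\to_{n+1}B \vdash A\to_n B$ \emph{by invoking the rule $L$}, and none of those three systems can \emph{lower} an implication index (the only index-changing rule available is $H$, which raises). Hence Lemma~\ref{t3-14}$(ii)$ already fails there --- for instance $p\to_5 q \nvdash_{\mathbf{BPC}_h} p\to_1 q$ --- and with it the key step in the proof of Theorem~\ref{t3-15}$(ii)$: in the $\to I$ case (and in the axiom case) one introduces the implication at a fresh large index $k$ and then must descend to the prescribed target index $n$, which is impossible without $L$. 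So your re-decoration from $\Gamma^f \vdash_{L_{\mathrm{prop}}} A^f$ back to $\Gamma \vdash_{L_{h,\mathrm{prop}}} A$ breaks, and the forgetful-translation route is available only for $(iv)$, which is indeed how the paper treats that part.

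For $(i)$--$(iii)$ the paper's actual argument is your ``alternative'', but it is considerably more than a rule-by-rule reading of a cut-free proof. One first observes that every formula occurring in a cut-free $G(L_\Box)$-derivation of $\Gamma^b\Rightarrow A^b$ has one of the shapes $B^b$, $B^b\to C^b$, or atom, and then proves a strengthened claim quantified over \emph{all} subsets $X$ of the left-hand ``implication'' components: the sequent translates to an $L$-derivation with $\{C_i\}_{i\in X}$ added on the left and $\{B_i\}_{i\notin X}$ disjoined on the right. The modal rules $\Box_{4_h}R$, $\Box_{D_h}R$, $\Box_{S_h}R$ are then discharged by iteratively absorbing the $B_i/C_i$ pairs via $H$, $tr_f$, $(\wedge I)_f$, $(\vee E)_f$ --- this is precisely where the formalized rules earn their keep, and the side-condition $n_i<n$ on the modal rules is what licenses the use of $H$. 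Your one-line description of the alternative gives no hint of this combinatorial core.
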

\begin{proof}\textit{(Soundness of the translation $b$).}
The proof is by induction on the length of the proofs in the propositional logics. More precisely, if we denote the propositional logic by $L$ and its modal counterpart by $L_{\Box}$, then  we will show that if $\Gamma \vdash_{L} A$ then $L_{\Box} \vdash \bigwedge \Gamma^b \rightarrow A^b$.\\

1. For the rules $\wedge I$, $\wedge E$, $\vee I$ and $\vee E$, the claim is easy to check. It is just an easy consequence of the fact that the translation $b$ commutes with conjunctions and disjunctions.\\

The following cases for the rules $\rightarrow I$, $\bot$ and the formalized rules are shown for $\mathbf{K4}_h$ but the proof for the other modal logics are the same.\\

2. For the rule $\rightarrow I$, by IH, we have $L_{\Box} \vdash \bigwedge \Gamma^b \wedge A^b \rightarrow B^b$. Therefore, $L_{\Box} \vdash \bigwedge \Gamma^b \rightarrow (A^b \rightarrow B^b) $. Since $n$ is bigger than all the indices in the statement $\bigwedge \Gamma^b \rightarrow (A^b \rightarrow B^b)$, by necessitation, we have 
\[
L_{\Box} \vdash \Box_n(\bigwedge \Gamma^b \rightarrow (A^b \rightarrow B^b))
\]
By the use of the axiom $\mathbf{K}_h$, we have
\[
L_{\Box} \vdash \Box_n(\bigwedge \Gamma^b) \rightarrow \Box_n(A^b \rightarrow B^b)
\]
By Lemma \ref{t3-11}, we have
\[
L_{\Box} \vdash \bigwedge \Gamma^b \rightarrow \Box_n(\bigwedge \Gamma^b) \;\;\; (*)
\]
therefore, by $(*)$ and the definition of $b$,
\[
L_{\Box} \vdash \bigwedge \Gamma^b \rightarrow (A \rightarrow_n B)^b
\]

3. For the rule $\bot$, by induction on $A$ we will show that $L_{\Box} \vdash \Box_0 \bot \rightarrow A^b$. For the atomic case, we have 
$L_{\Box} \vdash \bot \rightarrow p$, therefore by necessitation and the axiom $\mathbf{K}_h$ we have $L_{\Box} \vdash \Box_0 \bot \rightarrow \Box_0 p$ which is what we wanted. The conjunction and dijunction cases are easy to check. For the implication, we have $L_{\Box} \vdash \bot \rightarrow (A^b \rightarrow B^b)$. Hence, $L_{\Box} \vdash \Box_n \bot \rightarrow \Box_n(A^b \rightarrow B^b)$. Since $0 \leq n$, $L_{\Box} \vdash \Box_0 \bot \rightarrow \Box_n \bot$, thus $L_{\Box} \vdash \Box_0 \bot \rightarrow (A \rightarrow_n B)^b$.\\

For the rule $tr$, by IH if $L_{\Box} \vdash \bigwedge \Gamma^{b} \rightarrow A^b$ and $L_{\Box} \vdash \bigwedge \Gamma'^{b} \wedge A^b \rightarrow B^b$, then we obviously have $L_{\Box} \vdash \bigwedge \Gamma^{b} \wedge \bigwedge \Gamma'^b \rightarrow B^b$.\\ 

4. The case of formalized $\land I$, formalized $\vee E$ and $tr$. For the formalized $\wedge I$, by IH, we have
\[
L_{\Box} \vdash \bigwedge \Gamma^b \rightarrow \Box_n(A^b \rightarrow B^b)
\]
and
\[
L_{\Box} \vdash \bigwedge \Gamma^b \rightarrow \Box_n(A^b \rightarrow C^b)
\]
therefore, by some applications of the axiom $\mathbf{K}_h$ and the modus ponens rule, we have
\[
L_{\Box} \vdash \bigwedge \Gamma^b \rightarrow \Box_n(A^b \rightarrow B^b \land C^b)
\]
which completes the proof. The case for formalized $\vee E$ and $tr$ are the same.\\

5. For the rule $C$, by IH, we have
\[
\mathbf{KD4}_h \vdash \bigwedge \Gamma^b \rightarrow \Box_n(A^b \rightarrow \Box_0 \bot) \wedge A^b
\]
By Lemma \ref{t3-11}, $\mathbf{K4}_h \vdash A^b \rightarrow \Box_n A^b $
and
\[
\mathbf{KD4}_h \vdash \Box_n(\Box_0 \bot \leftrightarrow \bot)
\]
we have
\[
\mathbf{KD4}_h \vdash \bigwedge \Gamma^b \rightarrow \Box_n \bot
\]
and since $\mathbf{KD4}_h \vdash \Box_n \bot \rightarrow \bot$ and $\mathbf{KD4}_h \vdash \bot \rightarrow \Box_0 \bot$
then
\[
\mathbf{KD4}_h \vdash \bigwedge \Gamma^b \rightarrow \Box_0 \bot
\]

For the rule $R$ by IH, we have
\[
\mathbf{S4}_h \vdash \bigwedge \Gamma^b \rightarrow \Box_n(A^b \rightarrow B^b) \wedge A^b
\]
By Lemma \ref{t3-11}, $\mathbf{K4}_h \vdash A^b \rightarrow \Box_n A^b $,
which means
\[
\mathbf{S4}_h \vdash \bigwedge \Gamma^b \rightarrow \Box_n B^b
\]
we know that $\mathbf{S4} \vdash \Box_n B^b \rightarrow B^b$, hence
\[
\mathbf{S4}_h \vdash \bigwedge \Gamma^b \rightarrow B^b
\]

And finally for the rule, $L$, by IH
\[
\mathbf{GL}_h \vdash \Box_{n+1}(A^b \wedge \Box_n(A \rightarrow B) \rightarrow B^b)
\]
which means
\[
\mathbf{GL}_h \vdash \Box_{n+1}(\Box_n (A^b \rightarrow B^b) \rightarrow (A^b \rightarrow B^b))
\]
therefore
\[
\mathbf{GL}_h \vdash \Box_{n}(A^b \rightarrow B^b)
\]
hence $\mathbf{GL}_h \vdash (A \rightarrow_n B)^b$.
\end{proof}
\begin{proof}\textit{(Completeness of the translation $b$.)}
For the completeness part for the logics $\mathbf{BPC}_h$, $\mathbf{EBPC}_h$ and $\mathbf{IPC}_h$ we will use the sequent calculi for $\mathbf{K4}_h$, $\mathbf{KD4}_h$ and $\mathbf{S4}_h$ as introduced in the Preliminaries. Moreover, the structure of the proof for all of these logics are the same. Therefore, we will prove their completeness theorems simultaneously. To achieve this goal, denote the propositional logic by $L$, its modal counterpart by $L_{\Box}$ and the sequent calculi for $L_{\Box}$ by $G(L_{\Box})$. Assume that $\Gamma^b \vdash_{L_{\Box}} A^b$. Then $G(L_{\Box}) \vdash \Gamma^b \Rightarrow A^b$. Hence, there is a cut-free proof for $\Gamma^b \Rightarrow A^b$. Call it $\pi$. It is obvious that all the formulas occurring in $\pi$ are sub-formulas of $A^b$ or sub-formulas of the elements of $\Gamma^b$. We know that all of these sub-formulas have the following forms: $B^b$; $B^b \rightarrow C^b$ and atoms $p$. ($\top$ and $\bot$ are considered as atomic in this proof.) Therefore, every sequent in $\pi$ has the following form:
\[
\Gamma^b, \{B^b_i \rightarrow C^b_i\}_{i \in I}, \{p_j\}_{j \in J} \Rightarrow \Delta^b, \{D^b_r \rightarrow E^b_r\}_{r \in R}, \{q_s\}_{s \in S} 
\]
We will prove the following claim:\\
 
\textbf{Claim.} If 
\[
G(L_{\Box}) \vdash \; \Gamma^b, \{B^b_i \rightarrow C^b_i\}_{i \in I}, \{p_j\}_{j \in J} \Rightarrow \Delta^b, \{D^b_r \rightarrow E^b_r\}_{r \in R}, \{q_s\}_{s \in S} 
\]
then for any $X \subseteq I$
\[
\Gamma, \{p_j\}_{j \in J}, \{D_r\}_{r \in R}, \{C_i\}_{i \in X} \vdash_L \bigvee \{\Delta, \{q_s\}_{s \in S}, \{E_r\}_{r \in R}, \{B_i\}_{i \notin X}\}
\]

The proof is by induction on the length of the cut-free proof in $G(L_{\Box})$. The case for axioms and structural rules are easy to check. If the last rule is a conjunction or a disjunction rule, then the main formula is in the first form. Then since it is possible to simulate all conjunction and disjunction rules in $\mathbf{BPC}_h$, the case of conjunction and disjunction rules are also easy to check. If the last rule is an implication rule, since we define our claim up to using implicational rules, there is nothing to prove in this case. Finally, if the last rule is a modal rule, then, we have different cases:\\

1. The case $L=\mathbf{K4}_h$. If the last rule is a modal rule, based on the form of the formulas and the fact that in those three forms a boxed formula should have the first form, we have two cases. The first case is when the boxed formula in the right side has the form $\Box_n(D^b \rightarrow E^b)$. The second case is when the formula has the form $\Box_0 p$. For the first case, the last rule has the following form:
\begin{center}
  	\begin{tabular}{c c c}
		\AxiomC{$ \{F^b_r \rightarrow G^b_r\}_{r \in R}, \{p_j, \Box_0 p_j \}_{j \in J}, \{ B_i^b \rightarrow C_i^b, \Box_{n_i}(B_i^b \rightarrow C_i^b) \}_{i \in I} \Rightarrow D^b \rightarrow E^b$}
		\UnaryInfC{$ \Box_n(\{F^b_r \rightarrow G^b_r)\}_{r \in R}, \{\Box_0 p_j \}_{j \in J}, \{ \Box_{n_i}(B_i^b \rightarrow C_i^b) \}_{i \in I} \Rightarrow \Box_n (D^b \rightarrow E^b)$}
		\DisplayProof
		\end{tabular}
\end{center}
By IH and for any $X \subseteq I$ and $Y \subseteq R$ we have
\[
\{G_r\}_{r \in Y}, \{p_j \}_{j \in J}, \{ B_i \rightarrow_{n_i} C_i \}_{i \in I}, \{C_i\}_{i \in X}, D \vdash_{\mathbf{BPC}_h} \{F_r\}_{r \notin Y}, \{B_i\}_{i \notin X}, E
\]
and we want to prove
\[
\{F_r \rightarrow_n G_r\}_{r \in R}, \{p_j \}_{j \in J}, \{ B_i \rightarrow_{n_i} C_i \}_{i \in I} \vdash_{\mathbf{BPC}_h} D \rightarrow_n E.
\]
First notice that we know $n>n_i$, therefore by the rule $\rightarrow I$ the following is provable by $\Sigma=\{p_j \}_{j \in J} \cup \{ B_i \rightarrow_{n_i} C_i \}_{i \in I}$
\[
\bigwedge \{G_r\}_{r \in Y} \wedge \bigwedge \{C_i\}_{i \in X} \wedge D \rightarrow_n \bigvee \{F_r\}_{r \notin Y} \vee \bigvee \{B_i\}_{i \notin X} \vee E.
\]
Fix $i \in I$ and also fix some $Z \subseteq I-\{i\}$. Both of the following statements are theorems of $\Sigma$:
\[
\bigwedge \{G_r\}_{r \in Y} \wedge \bigwedge \{C_i\}_{i \in Z} \wedge D \rightarrow_n \bigvee \{F_r\}_{r \notin Y} \vee \bigvee \{B_i\}_{i \notin Z} \vee \mathbf{B_i} \vee E
\]
and
\[
\bigwedge \{G_r\}_{r \in Y} \wedge \bigwedge \{C_i\}_{i \in Z} \wedge \mathbf{C_i} \wedge D \rightarrow_n \bigvee \{F_r\}_{r \notin Y} \vee \bigvee \{B_i\}_{i \notin Z} \vee E.
\]
Since $\Sigma \vdash B_i \rightarrow_{n_i} C_i$ by using the rule $H$ and the fact that $n_i<n$, we will have $\Sigma \vdash \mathbf{B_i} \rightarrow_n \mathbf{C_i}$. Then by using appropriate formalized rules we have
\[
\bigwedge \{G_r\}_{r \in Y} \wedge \bigwedge \{C_i\}_{i \in Z} \wedge D \rightarrow_n \bigvee \{F_r\}_{r \notin Y} \vee \bigvee \{B_i\}_{i \notin Z}\vee E
\]
provable by $\Sigma$ in $\mathbf{BPC}_h$.
By iterating this method we can eliminate all the elements in $I$ and finally for any $Y \subseteq R$, we have
\[
\Sigma \vdash_{\mathbf{BPC}_h} \bigwedge \{G_r\}_{r \in Y} \wedge D \rightarrow_n \bigvee \{F_r\}_{r \notin Y} \vee E.
\]
Define $\Sigma'=\Sigma + \{F_r \rightarrow_n G_r\}_{r \in R}$. Therefore for any any $Y \subseteq R $
\[
\Sigma' \vdash_{\mathbf{BPC}_h} \bigwedge \{G_r\}_{r \in Y} \wedge D \rightarrow_n \bigvee \{F_r\}_{r \notin Y} \vee E
\]
but $\Sigma' \vdash_{\mathbf{BPC}_h} F_r \rightarrow_n G_r$, hence by the same method as above we can eliminate $R$ and hence we will have
\[
\Sigma' \vdash D \rightarrow_n  E
\]
which is what we wanted to prove.\\

If the boxed formula in the right side of the rule is $\Box_0 p$, since $0$ is the lowest possible index, the rule has the following form:
\begin{center}
  	\begin{tabular}{c c c}
		\AxiomC{$\{p_j \}_{j \in J} \Rightarrow p$}
		\UnaryInfC{$ \{\Box_0 p_j \}_{j \in J} \Rightarrow \Box_0 p$}
		\DisplayProof
		\end{tabular}
\end{center}
therefore, by IH, $\{p_j \}_{j \in J} \vdash_{\mathbf{BPC}_h} p$, which is what we wanted.\\

2. The case $L=\mathbf{KD4}_h$. If the last rule is $\Box_{4_h}R$, the proof is the same as the case 1. If the last rule is $\Box_{D_h}R$, then everything in the proof is the same as the proof for case 1 when we put $D=\top$ and $E=\bot$. Therefore, we have
\[
\{p_j \}_{j \in J}, \{ B_i \rightarrow_{n_i} C_i \}_{i \in I}, \{F_r \rightarrow_n G_r\}_{r \in R} \vdash_{\mathbf{EBPC}_h} \top \rightarrow_n  \bot.
\] 
Then by the rule $C$, we will have
\[
\{p_j \}_{j \in J}, \{ B_i \rightarrow_{n_i} C_i \}_{i \in I}, \{F_r \rightarrow_n G_r\}_{r \in R} \vdash_{\mathbf{EBPC}_h} \bot
\]
which is what we wanted.\\

3. The case $L=\mathbf{S4}_h$. If the last rule is $\Box_{S_h}R$, then the proof is similar to the case 1. If the last rule is $\Box_h L$, then there are two cases. First, the case in which the boxed formula has the form $\Box_n (B^b \rightarrow C^b)$. And the second case in which the boxed formula has the form $\Box_0 p$. For the first case, the rule should have the following form:
\begin{center}
  	\begin{tabular}{c c c}
		\AxiomC{$\Gamma^b, \{B^b_i \rightarrow C^b_i\}_{i \in I}, \{p_j\}_{j \in J}, B^b \rightarrow C^b \Rightarrow \Delta^b, \{D^b_r \rightarrow E^b_r\}_{r \in R}, \{q_s\}_{s \in S}$}
		\UnaryInfC{$\Gamma^b, \{B^b_i \rightarrow C^b_i\}_{i \in I}, \{p_j\}_{j \in J}, \Box_m(B^b \rightarrow C^b) \Rightarrow \Delta^b, \{D^b_r \rightarrow E^b_r\}_{r \in R}, \{q_s\}_{s \in S} $}
		\DisplayProof
		\end{tabular}
\end{center}
Therefore by IH, for any $X \subseteq I$ and $Y \subseteq R$ we have
\[
\Gamma, \{p_j\}_{j \in J}, \{D_r\}_{r \in R}, \{C_i\}_{i \in X}, C \vdash_{\mathbf{IPC}_h} \bigvee \{\Delta, \{q_s\}_{s \in S}, \{E_r\}_{r \in R}, \{B_i\}_{i \notin X}\}
\]
and
\[
\Gamma, \{p_j\}_{j \in J}, \{D_r\}_{r \in R}, \{C_i\}_{i \in X} \vdash_{\mathbf{IPC}_h} B \vee \bigvee \{\Delta, \{q_s\}_{s \in S}, \{E_r\}_{r \in R}, \{B_i\}_{i \notin X}\}.
\]
Since
\[
\Gamma, \{p_j\}_{j \in J}, \{D_r\}_{r \in R}, \{C_i\}_{i \in X}, B \rightarrow_m C \vdash_{\mathbf{IPC}_h} B \rightarrow_m C
\]
and we have $B \rightarrow_m C, B \vdash_{\mathbf{IPC}_h} C$, then
\[
\Gamma, \{p_j\}_{j \in J}, \{D_r\}_{r \in R}, \{C_i\}_{i \in X} \vdash_{\mathbf{IPC}_h} C \vee \bigvee \{\Delta, \{q_s\}_{s \in S}, \{E_r\}_{r \in R}, \{B_i\}_{i \notin X}\}.
\]
By using some appropriate formalized rule and $tr$ on $C$ we will have
\[
\Gamma, \{p_j\}_{j \in J}, \{D_r\}_{r \in R}, \{C_i\}_{i \in X}, B \rightarrow_m C \vdash_{\mathbf{IPC}_h} \bigvee \{\Delta, \{q_s\}_{s \in S}, \{E_r\}_{r \in R}, \{B_i\}_{i \notin X}\}
\]
which is what we wanted. The second case is straightforward by IH.\\

After proving the claim, the theorem is an easy consequence: Since there is a proof of $\Gamma^b \Rightarrow A^b$ in $G(L_{\Box})$, then by claim we have $\Gamma \vdash_L A$. \\

For the logic $\mathbf{GL}_h$, if $\Gamma^b \vdash_{\mathbf{GL}_h} A^b$ then by using the forgetful translation, which forgets the indices of the boxes, we will have $(\Gamma^b)^f \vdash_{\mathbf{GL}} (A^b)^f$. Since for any formula $X \in \mathcal{L}_{\infty}$, $(X^b)^f=(X^f)^b$, hence $(\Gamma^f)^b \vdash_{\mathbf{GL}} (A^f)^b$. Therefore by completeness of $b$ between $\mathbf{GL}$ and $\mathbf{FPL}$, we have $\Gamma^f \vdash_{\mathbf{FPL}} A^f$. Define $w_{\Gamma}$ and $w_A$ such that $\Gamma^f(w_{\Gamma})=\Gamma$ and $A^f(w_A)=A$. Then by Theorem \ref{t3-15} we have $\Gamma^f(w_{\Gamma}) \vdash_{\mathbf{FPL}_h} A^f(w_A)$ which completes the proof.
\end{proof}
And the final part of this section contains the proof of the fact that these propositional logics have the disjunction property, as we expect for any constructive logic:
\begin{thm}\label{t3-17}
All of the logics $\mathbf{BPC}_h$, $\mathbf{EBPC}_h$, $\mathbf{IPC}_h$ and $\mathbf{FPL}_h$ have disjunction property.
\end{thm}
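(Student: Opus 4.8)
The plan is to transfer the disjunction property from the propositional side to the modal side through the G\"odel translation $b$, where it becomes exactly the \emph{strong} disjunction property already recorded in Theorem \ref{t2-8}. Fix one of the four logics and call it $L$, with $L_{\Box}$ its modal counterpart under Theorem \ref{t3-16}: so $(L, L_{\Box})$ ranges over $(\mathbf{BPC}_h, \mathbf{K4}_h)$, $(\mathbf{EBPC}_h, \mathbf{KD4}_h)$, $(\mathbf{IPC}_h, \mathbf{S4}_h)$, $(\mathbf{FPL}_h, \mathbf{GL}_h)$. Suppose $L \vdash A \vee B$. By the soundness direction of Theorem \ref{t3-16} (with empty context) we get $L_{\Box} \vdash (A \vee B)^b$, and since $b$ commutes with disjunction by Definition \ref{t3-9}, this is $L_{\Box} \vdash A^b \vee B^b$.

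The next step is to turn the two disjuncts into genuinely boxed formulas so that Theorem \ref{t2-8} becomes applicable. Choose $n$ strictly larger than every index of a box occurring in $A^b$ or in $B^b$; then $\Box_n A^b$ and $\Box_n B^b$ are legitimate formulas of $\mathcal{L}_{\infty}^{\Box}$ and $n$ is large enough for Lemma \ref{t3-11} to apply. Each of $\mathbf{K4}_h$, $\mathbf{KD4}_h$, $\mathbf{S4}_h$, $\mathbf{GL}_h$ extends $\mathbf{K4}_h$ (all of them contain $\mathbf{H}$, $\mathbf{K}_h$, $\mathbf{4}_h$), so Lemma \ref{t3-11} gives $L_{\Box} \vdash A^b \rightarrow \Box_n A^b$ and $L_{\Box} \vdash B^b \rightarrow \Box_n B^b$; combined with $L_{\Box} \vdash A^b \vee B^b$ this yields $L_{\Box} \vdash \Box_n A^b \vee \Box_n B^b$.

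Now invoke Theorem \ref{t2-8}: since it asserts the strong disjunction property for all four logics $\mathbf{K4}_h$, $\mathbf{KD4}_h$, $\mathbf{S4}_h$, $\mathbf{GL}_h$, from $L_{\Box} \vdash \Box_n A^b \vee \Box_n B^b$ we conclude $L_{\Box} \vdash A^b$ or $L_{\Box} \vdash B^b$. Finally, applying the completeness direction of Theorem \ref{t3-16} (again with empty context) to whichever disjunct holds, we obtain $L \vdash A$ or $L \vdash B$, which is precisely the disjunction property for $L$.

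Essentially all the genuine work has already been done, so there is no serious obstacle here; the only points that need a little care are that $\Box_n$ be applied with an index high enough to keep $\Box_n A^b$ and $\Box_n B^b$ inside $\mathcal{L}_{\infty}^{\Box}$ and to license Lemma \ref{t3-11}, together with the routine observation that $\mathbf{K4}_h \subseteq L_{\Box}$ in every case. The conceptual linchpin is Lemma \ref{t3-11}: it says a $b$-translated formula already behaves as if prefixed by an arbitrarily high box, which is exactly what makes the plain disjunction property downstairs match the \emph{strong} (boxed) disjunction property upstairs.
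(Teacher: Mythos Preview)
Your proof is correct and follows essentially the same route as the paper: soundness of $b$ to pass to $L_\Box$, Lemma \ref{t3-11} to box the disjuncts, the strong disjunction property of Theorem \ref{t2-8}, and then completeness of $b$ to come back down. If anything, you are a bit more explicit than the paper about why the chosen index $n$ is admissible and why Lemma \ref{t3-11} is available in each $L_\Box$.
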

\begin{proof}
The proof for all of these logics are the same. Assume $L$ is one of the mentioned propositional logics, and $L_{\Box}$ is its modal counterpart. Then if $L \vdash A \vee B$ then by soundness of the translation $b$, $L_{\Box} \vdash A^b \vee B^b$. Then, by Lemma \ref{t3-11}, we have $L_{\Box} \vdash \Box_n A^b \vee \Box_m B^b$ for some big enough $m$ and $n$. Then by strong disjunction property for $L_{\Box}$, Theorem \ref{t2-8}, we have: $L_{\Box} \vdash A^b$ or $ L_{\Box} \vdash B^b$. Therefore, by completeness of the translation $b$, we will have $L \vdash A$ or $L \vdash B$. 
\end{proof}
\section{Soundness-Completeness Theorems}
In this section we will prove the soundness-completeness theorems for propositional logics that we introduced in the previous section. To do so, we have to define the notion of a BHK model. It is clear that formalizing the BHK interpretation needs formalizing two different kinds of conditions: The first is the way that the BHK interpretation interprets propositional connectives and the second is the consistency assumption which states that there is no proof for inconsistency. For the first one, we defined the satisfaction relation between provability models and propositional formulas exactly as what the BHK interpretation demands. For the second condition, we need the following discussion: First of all, it seems clear that the natural formalization of this condition is the consistency assumption on the provability model which states that $M \vDash \neg \Pr_n(\bot)$ for all $n \geq 0$. But we have to notice that in the intuitionistic tradition everything should be also reflected in the level of provability. To implement this idea, there are two possible natural ways: The first one is assuming that the meta-theory of $T_n$ is strong enough to prove its consistency, i.e. $M \vDash \Pr_{n+1}(\neg \Pr_n(\bot))$. This statement seems totally natural to assume, but the cost is a lot. In fact, it makes the BHK interpretation much more limited than what we expected. For instance, on the one hand, the statement $(\top \rightarrow_{n} \bot) \rightarrow_{n+1} \bot$ would be valid in all the BHK models which means that $\mathbf{BHK} \vDash (\top \rightarrow_n \bot) \rightarrow_{n+1} \bot$ and on the other hand, $(\top \rightarrow_n \bot) \rightarrow_{n+1} \bot$ is the essential axiom of $\mathbf{EBPC}_h$. Therefore, there is no BHK characterization of logics below $\mathbf{EBPC}_h$ which is not what we expected. The second approach is based on assuming a weaker version which states that $T_{n+1}$ can not prove the inconsistency of $T_n$, i.e $M \vDash \neg \Pr_{n+1}(\Pr_n(\bot))$ for all $n \geq 0$. This formalization seems more liberal than the first one, and we will choose it as our formalization. Notice that for the logics above $\mathbf{EBPC}_h$, this condition is not needed, since the provability models are strong enough to satisfy it automatically.
\begin{dfn}\label{t4-1}
A provability model $(M, \{T_n\}_{n=0}^{\infty})$ is called a BHK model if for all $n \geq 0$, $M \vDash \neg \Pr_{n+1}(\Pr_n(\bot))$. We will denote the class of all BHK models by $\mathbf{BHK}$ and the class of all constant BHK models by $\mathbf{cBHK}$.
\end{dfn}
\begin{thm}\label{t4-2}(Soundness-Completeness)
\begin{itemize}
\item[$(i)$]
$\Gamma \vdash_{\mathbf{BPC}_h} A$ iff $\mathbf{PrM} \vDash \Gamma \Rightarrow A$. Moreover, $\mathbf{BPC}_h \vdash A$ iff $\mathbf{BHK} \vDash A$.
\item[$(ii)$]
$\Gamma \vdash_{\mathbf{EBPC}_h} A$ iff $\mathbf{Cons} \vDash \Gamma \Rightarrow A$. 
\item[$(iii)$]
$\Gamma \vdash_{\mathbf{IPC}_h} A$ iff $\mathbf{Ref} \vDash \Gamma \Rightarrow A$.
\item[$(iv)$]
$\Gamma \vdash_{\mathbf{FPL}_h} A$ iff $\mathbf{Cst} \vDash \Gamma \Rightarrow A$. Moreover, $\mathbf{FPL}_h \vdash A$ iff $\mathbf{cBHK} \vDash A$.
\item[$(v)$]
There is no BHK model $(M, \{T_n\}_{n=0}^{\infty})$ such that $(M, \{T_n\}_{n=0}^{\infty}) \vDash \mathbf{CPC}_h$.
\end{itemize}
\end{thm}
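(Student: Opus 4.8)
The plan is to obtain (i)--(iv) by reduction to the modal soundness-completeness theorems via the translation $b$, and then to treat the $\mathbf{BHK}$-specific statements and (v) separately.

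\textbf{The main equivalences (i)--(iv).} The key point is that the translation $b$ of Definition \ref{t3-9} is transparent for the arithmetical semantics. I would prove by induction on $A\in\mathcal{L}_{\infty}$ that $(A^{b})^{\sigma}=A^{\sigma}$ for every arithmetical substitution $\sigma$, where on the left $A^{b}$ is evaluated as a modal formula in the sense of Definition \ref{t2-3} and on the right $A$ is evaluated directly by Definition \ref{t3-6}: the base case works because $p^{b}=\Box_0 p$ is sent to $\Pr_0(\sigma(p))=p^{\sigma}$ (and similarly for $\top,\bot$), the step for $\to_n$ works because $(A\to_n B)^{b}=\Box_n(A^{b}\to B^{b})$ is sent to $\Pr_n(A^{\sigma}\to B^{\sigma})=(A\to_n B)^{\sigma}$, and the Boolean cases are immediate. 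Hence $\mathcal{C}\vDash\Gamma\Rightarrow A$ in the sense of Definition \ref{t3-7} iff $\mathcal{C}\vDash\Gamma^{b}\Rightarrow A^{b}$ in the sense of Definition \ref{t2-4}, for any class $\mathcal{C}$ of provability models. Combining this with Theorem \ref{t3-16} and Theorem \ref{t2-10} yields all four equivalences at once: for instance $\Gamma\vdash_{\mathbf{BPC}_h}A$ iff $\Gamma^{b}\vdash_{\mathbf{K4}_h}A^{b}$ iff $\mathbf{PrM}\vDash\Gamma^{b}\Rightarrow A^{b}$ iff $\mathbf{PrM}\vDash\Gamma\Rightarrow A$, and identically for the triples $(\mathbf{EBPC}_h,\mathbf{KD4}_h,\mathbf{Cons})$, $(\mathbf{IPC}_h,\mathbf{S4}_h,\mathbf{Ref})$ and $(\mathbf{FPL}_h,\mathbf{GL}_h,\mathbf{Cst})$.

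\textbf{The $\mathbf{BHK}$ and $\mathbf{cBHK}$ statements.} Since $\mathbf{BHK}\subseteq\mathbf{PrM}$ and $\mathbf{cBHK}\subseteq\mathbf{Cst}$, the soundness halves $\mathbf{BPC}_h\vdash A\Rightarrow\mathbf{BHK}\vDash A$ and $\mathbf{FPL}_h\vdash A\Rightarrow\mathbf{cBHK}\vDash A$ follow at once from (i) and (iv). For the completeness halves I would first record a useful fact: any $\mathbf{BHK}$ model satisfies $M\vDash\neg\Pr_n(\bot)$ for all $n$, because provable $\Sigma_1$-completeness gives $I\Sigma_1\vdash\Pr_n(\bot)\to\Pr_{n+1}(\Pr_n(\bot))$, so the defining clause of Definition \ref{t4-1} forces it (this also shows $\mathbf{Cons}\subseteq\mathbf{BHK}$, which explains why the consistency condition is superfluous above $\mathbf{EBPC}_h$). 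Then, given $\mathbf{BPC}_h\nvdash A$ (resp. $\mathbf{FPL}_h\nvdash A$), we get $\mathbf{K4}_h\nvdash A^{b}$ (resp. $\mathbf{GL}_h\nvdash A^{b}$), and it remains to revisit the arithmetical completeness constructions of \cite{AK2} and verify that a hierarchy of theories realizing a countermodel can be chosen to satisfy the weak meta-consistency clause of Definition \ref{t4-1}; here one uses that $A^{b}$ is inherently boxed (Lemma \ref{t3-11}), so that the realization only has to handle formulas of the restricted shape occurring in the proof of Theorem \ref{t3-16}.

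\textbf{The impossibility (v).} Suppose toward a contradiction that $(M,\{T_n\}_{n=0}^{\infty})$ is a $\mathbf{BHK}$ model with $(M,\{T_n\}_{n=0}^{\infty})\vDash\mathbf{CPC}_h$; by the previous paragraph every $T_n$ is consistent in $M$. The strategy is to use the excluded-middle rule $D$ together with the detachment rule $R$ that $\mathbf{CPC}_h$ inherits from $\mathbf{IPC}_h$, evaluated under a carefully chosen self-referential substitution, to force $M\vDash\Pr_k(\Pr_0(\bot))$ for some $k\geq 1$; since $T_0\subseteq T_{k-1}$ provably in $I\Sigma_1$, this gives $M\vDash\Pr_k(\Pr_{k-1}(\bot))$, contradicting the defining clause of Definition \ref{t4-1}. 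Concretely, $\mathbf{CPC}_h\vdash p\vee\neg_1 p$ gives $M\vDash\Pr_0(\sigma(p))\vee\Pr_1(\Pr_0(\sigma(p))\to\Pr_0(\bot))$ for every $\sigma$; I would take $\sigma(p)$ to be a G\"{o}del-type sentence built from $T_0$ and $T_1$ (e.g. a G\"{o}del sentence of the theory $T_1+\Cons(T_0)$), arranged so that the first disjunct fails by formalized G\"{o}del's first theorem together with $M\vDash\neg\Pr_1(\bot)$, and the second disjunct fails because $T_1+\Cons(T_0)+\Pr_0(\sigma(p))$ is consistent in $M$; if needed one also pushes the same substitution through the classical theorem $\neg_2\neg_1 p\to_3 p$ and iterates one level higher. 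The hard part is precisely the design of this substitution: formalized G\"{o}del and Rosser constructions tend to make one of the two disjuncts of $D$ trivially true, so the sentence has to be chosen to block both simultaneously (and to make any upward iteration terminate at a bounded level), and verifying this is where the real content of (v) lies.
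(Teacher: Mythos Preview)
Your treatment of the four main equivalences in (i)--(iv) matches the paper exactly: both arguments observe that $(A^{b})^{\sigma}=A^{\sigma}$ and then compose Theorem~\ref{t3-16} with Theorem~\ref{t2-10}.

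The gap is in the $\mathbf{BHK}$/$\mathbf{cBHK}$ completeness halves. You propose to ``revisit the arithmetical completeness constructions of \cite{AK2}'' and check that the countermodel can be taken to be a BHK model. This is not a proof but a hope: nothing in the statement of Theorem~\ref{t2-10} guarantees that the hierarchy produced there satisfies $M\vDash\neg\Pr_{n+1}(\Pr_n(\bot))$, and reopening that construction is exactly what the paper avoids. The paper's route is entirely internal: encode the BHK condition as the modal set $\Delta=\{\neg\Box_{n+1}\Box_n\bot:n\ge 0\}$, use completeness of $\mathbf{K4}_h$ over $\mathbf{PrM}$ to get $\Delta\vdash_{\mathbf{K4}_h}A^{b}$, extract a finite subset, rewrite the resulting $\mathbf{K4}_h$-theorem as a $b$-translation, and then apply the \emph{disjunction property} of $\mathbf{BPC}_h$ (Theorem~\ref{t3-17}) to conclude $\mathbf{BPC}_h\vdash A$, since the alternative disjuncts $\top\to_{n_i+1}(\top\to_{n_i}\bot)$ are refuted by soundness in the concrete model $(\mathbb{N},\{I\Sigma_1\})$. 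The same trick handles $\mathbf{cBHK}$ and $\mathbf{FPL}_h$. You have all the ingredients (Lemma~\ref{t3-11}, Theorems~\ref{t3-16} and~\ref{t3-17}) but did not see how to combine them, and the detour through \cite{AK2} is the wrong direction.

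For (v) you are in the right neighbourhood---the theory $S=T_1+\Cons(T_0)$ is indeed the key object, and the instance $p\vee\neg_1 p$ is one of the two inputs---but you explicitly leave the construction of $\sigma(p)$ open and describe your plan as making ``both disjuncts fail'', which is not how the argument runs. The paper does not search for a sentence refuting both disjuncts; instead it reads off from $p\vee\neg_1 p$ the implication $M\vDash\neg\Pr_0(\phi)\to\Pr_1(\Pr_0(\phi)\to\Pr_0(\bot))$ for all $\phi$, specialises to $\phi=\Pr_S(\bot)$ (so that the right-hand side becomes $\Pr_S(\neg\Pr_0(\Pr_S(\bot)))$), and then combines this with formalized $\Sigma_1$-completeness and G\"odel's second incompleteness theorem for $S$ to force $M\vDash\Pr_0(\Pr_S(\bot))$. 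A second $\mathbf{CPC}_h$-theorem, namely $(\top\to_1\bot)\to_2\bot$, is used to convert this into $M\vDash\Pr_2(\Pr_1(\bot))$, contradicting Definition~\ref{t4-1}. Your sketch mentions neither this second theorem nor the specific choice $\phi=\Pr_S(\bot)$, and the suggested iteration through $\neg_2\neg_1 p\to_3 p$ is unnecessary.
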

\begin{proof}
First of all notice that for any formula $A \in \mathcal{L}_{\infty}$ and any provability model $(M, \{T_n\}_{n=0}^{\infty})$, $(M, \{T_n\}_{n=0}^{\infty}) \vDash A$ is equivalent to $(M, \{T_n\}_{n=0}^{\infty}) \vDash A^b$ by definition. Therefore $(i)$, $(ii)$, $(iii)$ and $(iv)$ are easy consequences of the Theorem \ref{t3-16} and the soundness-completeness theorem of the corresponding modal logics (Theorem \ref{t2-10}). The remaining part is the completeness theorem for the classes $\mathbf{BHK}$ and $\mathbf{cBHK}$ for $\mathbf{BPC}_h$ and $\mathbf{FPL}_h$, respectively. For $\mathbf{BPC}_h$, define $\Delta$ as the set consisting of all instances of the formula $\neg \Box_{n+1} \Box_n \bot$ for any $n \geq 0$. Notice that $(M, \{T_n\}_{n=0}^{\infty}) \vDash \Delta$, iff $(M, \{T_n\}_{n=0}^{\infty})$ is a BHK model. Since $(M, \{T_n\}_{n=0}^{\infty}) \vDash A$ is equivalent to $(M, \{T_n\}_{n=0}^{\infty}) \vDash A^b$, and since $\mathbf{BHK} \vDash A$, hence $\mathbf{PrM} \vDash \Delta \Rightarrow A^b$. By completeness of $\mathbf{K4}_h$, we have $ \Delta \vdash_{\mathbf{K4}_h} A^b$. Therefore, there are finite instances of the formulas in $\Delta$ such that $ \{\neg \Box_{n_i+1} \Box_{n_i} \bot\}_{i=0}^{r} \vdash_{\mathbf{K4}_h} A^b$. Hence,
\[
 \mathbf{K4}_h \vdash (\bigvee_{i=0}^{r} \Box_{n_i+1} \Box_{n_i} \bot \vee A^b).
\]
Since by the axiom $\mathbf{H}$ we can increase $n_i$'s, w.l.o.g assume $n_i>0$. Moreover we have $\mathbf{K4}_h \vdash \Box_{n_i+1} \Box_{n_i} \bot \rightarrow \Box_{n_i+1} \Box_{n_i} \Box_0 \bot$ and also we know that $(\top \rightarrow_{n_i+1} (\top \rightarrow_{n_i} \bot))^b$ is equivalent to $\Box_{n_i+1} \Box_{n_i} \Box_0 \bot$ provably in $\mathbf{K4}_h$, hence
\[
 \mathbf{K4}_h \vdash (\bigvee_{i=0}^{r} (\top \rightarrow_{n_i+1} (\top \rightarrow_{n_i} \bot)) \vee A)^b
\]
and by completeness of the translation $b$, Theorem \ref{t3-16}, we have
\[
\mathbf{BPC}_h \vdash (\bigvee_{i=0}^{r} (\top \rightarrow_{n_i+1} (\top \rightarrow_{n_i} \bot)) \vee A).
\]
By the disjunction property of $\mathbf{BPC}_h$, Theorem \ref{t3-17}, we have $\mathbf{BPC}_h \vdash A$ or for some $i$, $\mathbf{BPC}_h \vdash (\top \rightarrow_{n_i+1} (\top \rightarrow_{n_i} \bot))$. The latter is impossible because if $\mathbf{BPC}_h \vdash (\top \rightarrow_{n_i+1} (\top \rightarrow_{n_i} \bot))$ then by the soundness of $\mathbf{BPC}$, the formula $(\top \rightarrow_{n_i+1} (\top \rightarrow_{n_i} \bot))$ should be true in the provability model $(\mathbb{N}, \{I\Sigma_1\}_{n=0}^{\infty})$. It means that $\mathbb{N} \vDash \Pr_{n_i+1}(\Pr_{n_i}(\Pr_0(\bot)))$. Hence, $I\Sigma_1 \vdash \Pr_{I\Sigma_1}(\Pr_{I\Sigma_1}(\bot))$ which also means that $I\Sigma_1 \vdash \bot$ which is not the case. Hence, $\mathbf{BPC}_h \vdash A$.
The case for $\mathbf{FPL}_h$ is exactly the same.\\

For $(v)$, we will prove the claim by contradiction. Firstly, we want to show that the following two statements hold:
\begin{itemize}
\item[$(i)$]
$M$ thinks that $T_{2} \vdash \Pr_{1}(\Pr_0(\bot)) \rightarrow \Pr_0(\bot)$ (a weak version of the provability of the consistency assumption).
\item[$(ii)$]
For any arithmetical statement $\phi$, $M$ thinks
\[
\neg \Pr_0(\phi) \rightarrow \Pr_{1}(\Pr_0(\phi) \rightarrow \Pr_0(\bot))
\]
(a weak version of the axiom $\mathbf{5}_h$).
\end{itemize}
For $(i)$, consider the formula $ (\top \rightarrow_1 \bot) \rightarrow_{2} \bot$. Since it is a theorem of $\mathbf{CPC}_h$, then
\[
(M, \{T_n\}_{n=0}^{\infty}) \vDash \Pr_{2} (\Pr_{1} (\Pr_0 (\bot)) \rightarrow \Pr_0 (\bot))
\]
and therefore, we will have $(i)$.\\

Secondly, we know $\mathbf{CPC}_h \vdash p \vee \neg_1 p$. Hence, for any arithmetical substitution $\sigma$ we have $(M, \{T_n\}_{n=0}^{\infty}) \vDash \Pr_0 (p^{\sigma}) \vee \Pr_1 (\Pr_0 (p^{\sigma}) \rightarrow \Pr_0 (\bot))$. Therefore, if we assume $p^{\sigma}=\phi$, then $(ii)$ follows.\\

Using these two statements, we will reach the contradiction. First of all, to simplify the proof, we will use $\Pr(A)$ for $\Pr_{S}$ in which $S=T_1+\Cons(T_0)$. Put $\phi=\Pr(\bot)$, then $(ii)$ would be equivalent to
\[
M \vDash \neg \Pr_0(\Pr(\bot)) \rightarrow \Pr(\neg \Pr_0(\Pr(\bot))).
\]
On other hand by the formalized $\Sigma_1$-completeness, we have 
\[
I\Sigma_1 \vdash \neg \Pr_0(\Pr(\bot)) \rightarrow \neg \Pr(\bot),
\]
hence,
\[
S \vdash \neg \Pr_0(\Pr(\bot)) \rightarrow \neg \Pr(\bot).
\]
Moreover, by $\Sigma_1$-completeness, we have 
\[
I\Sigma_1 \vdash \Pr(\neg \Pr_0(\Pr(\bot)) \rightarrow \neg \Pr(\bot)).
\]
Therefore,
\[
I\Sigma_1 \vdash \Pr(\neg \Pr_0(\Pr(\bot))) \rightarrow \Pr(\neg \Pr(\bot)).
\]
And since $M \vDash I\Sigma_1$, we have
\[
M \vDash \neg \Pr_0(\Pr(\bot)) \rightarrow \Pr(\neg \Pr(\bot)).
\]
Based on G\"{o}del's second incompleteness theorem formalized in $I\Sigma_1$, we can conclude
\[
I\Sigma_1 \vdash \neg \Pr(\bot) \rightarrow \neg \Pr(\neg \Pr(\bot)).
\]
On the other hand, since $(M, \{T_n\}_{n=0}^{\infty})$ is a BHK model, we have 
$
M \vDash \neg \Pr_{1}(\Pr_0(\bot)),
$
hence $M \vDash \neg \Pr(\bot)$. Since $M \vDash I\Sigma_1$, 
$
M \vDash \neg \Pr(\neg \Pr(\perp)).
$
Therefore,
$
M \vDash \Pr_0(\Pr(\bot))
$
and thus by the definition of $S$ we have
$
M \vDash \Pr_0(\Pr_1(\Pr_0(\bot))).
$
By $(i)$, $M \vDash \Pr_2(\Pr_{1}(\Pr_0(\bot)) \rightarrow \Pr_0(\bot))$. Since $M \vDash \Pr_0(\Pr_1(\Pr_0(\bot)))$, we have $M \vDash \Pr_2(\Pr_0(\bot))$. Therefore, we have $M \vDash \Pr_2(\Pr_1(\bot))$ which contradicts with the condition of being a BHK model. Therefore, we reach a contradiction and it proves the theorem.
\end{proof}
\vspace{4pt}
\textbf{Acknowledgment.} We wish to thank Raheleh Jalali and Masoud Memarzadeh for their careful reading of the earlier draft and their useful comments.


\begin{thebibliography}{99} 
\addcontentsline{toc}{section}{References} 
\bibitem{AK1}
A. Akbar tabatabai, Provability Interpretation of Propositional and Modal Logics, Preprint, 2016.
\bibitem{AK2}
A. Akbar tabatabai, Provability Logics of Hierarchies, Preprint, 2017.
\bibitem{Ar}
M. Ardeshir, B. Hesaam, An introduction to Basic Arithmetic, Logic Jnl IGPL (2008) 16 (1): 1-13.
\bibitem{G}
K. G\"{o}del, Eine Interpretation des Intuitionistichen Aussagenkalk\"{u}ls, Ergebnisse Math Colloq. Vol. 4 (1933), pp. 39-40.
\bibitem{Vi}
A. Visser, A propositional logic with explicit fixed points. Studia Logica 40 (1981), 155-175.
\end{thebibliography}
\end{document}